\newtheorem{theorem}{Theorem}[section]
\newtheorem{lemma}[theorem]{Lemma}
\newtheorem{proposition}[theorem]{Proposition}
\newtheorem{definition}{Definition}[section]
\newtheorem{remark}{Remark}[section]
\title{The existence and uniqueness of infinite combinatorial Yamabe flows}
\author{Bohao Ji}
\date{}
\begin{document}

\maketitle

\begin{abstract}
    In this paper, we study the combinatorial Yamabe flow on infinite triangulated surfaces in Euclidean background geometry, aiming for solving discrete Yamabe problem on noncompact surfaces. Under suitable conditions, we establish the short-time existence and uniqueness of the flow. We further introduce an extended version of the flow and prove its long-time existence. As an application, we prove the convergence result of the Yamabe flow in the case of hexagonal triangulations of the plane.

\end{abstract}


\section{Introduction}
\numberwithin{equation}{section}

The well-known Yamabe problem asks whether there exists a constant scalar curvature metric that is conformal to any given Riemannian metric \cite{MR125546}.
The answer is affirmative on closed Riemannian manifolds \cite{MR788292}. 
As an analog for discrete cases, Luo \cite{MR2100762} first introduced the notion of discrete conformal factors on the triangulated surface $M$ with a PL(piecewise linear) metric.
Similarly, fix a PL metric $d$ on $M$. The discrete Yamabe problem then asks whether there is a constant curvature PL metric within the
discrete conformal class $[d]$. Luo proposed to find the constant curvature PL metric by the combinatorial Yamabe flow. 
Luo considered the combinatorial Yamabe flow as a negative gradient flow of a locally convex functional $F$ and proved that the solution of the normalized combinatorial Yamabe flow converges exponentially fast to a constant curvature PL metric if no singularity develops. If some triangle breaks the triangle inequalities and degenerates to a line segment in finite time, we say that the flow develops removable singularities. In this case, the flow stops in finite time. In order to overcome this issue, Luo introduced a surgery on the triangulation called "edge-flipping". Whenever some triangle degenerates, say, the vertex $v_i$ is moving towards the interior of the edge $v_jv_k$ and $\Delta v_iv_jv_k$ and $\Delta v_lv_jv_k$ are two adjacent triangles. Then one can delete the edge $v_jv_k$ and add the new edge $v_iv_l$ and the flow continues after this operation. In his graduate thesis, Wu proved that if the flow converges, edge-flipping will occur only finitely many times. For detailed proofs, we recommend that the reader consult the English edition \cite{wu_flip}.

Ge-Jiang \cite{MR3566936} proposed an alternative method to address the problem of removable singularities. In their work, they studied the normalized combinatorial Yamabe flow on finitely triangulated surfaces. The core of their approach involved developing an extension of the flow that guarantees the existence of solutions for all time $t > 0$ without performing surgeries on the triangulations.

In the past few decades, there have been various important works on discrete conformal geometry. Bobenko-Pinkall-Springborn \cite{MR3375525} established a connection between discrete conformal geometry and the geometry of ideal hyperbolic polyhedra. They also obtained an explicit formula of the convex functional $F$ introduced in \cite{MR2100762}. Gu-Luo-Sun-Wu \cite{MR3807319} established a discrete uniformization theorem, which shows that each polyhedral metric
on a compact surface is discrete conformal to a constant curvature metric which is unique up to scaling. Other important related works can be referred to \cite{MR3825607, MR4466644, MR2399657, MR2788656} and others.

Most of the prior works mentioned above have focused on discrete geometry on finite triangulations of compact manifolds. It is also an important topic to study discrete conformal structures on infinite triangulations of noncompact surfaces. In the circle packing setting, Rodin and Sullivan \cite{MR906396} first obtained a rigidity result about hexagonal triangulations. He-Schramm \cite{MR1331923} and He \cite{MR1680531} yielded classification results that characterize circle packings on infinite triangulations of the complex plane. In the PL discrete conformal setting, Wu-Gu-Sun \cite{MR3356946} and Dai-Ge-Ma \cite{MR4389487} studied the rigidity problem of the infinite hexagonal triangulation of the complex plane. Dai-Wu \cite{MR4792293} extended the rigidity results to Delaunay triangulations of the plane. Other important related works can be referred to \cite{2404.11258, MR3457760} and others.

Recently, Ge-Hua-Zhou \cite{2504.05817} employed parabolic methods of partial differential equations to study the combinatorial Ricci flow introduced by Chow-Luo \cite{MR2015261} on infinite disk triangulations in Euclidean or hyperbolic background geometry.
Inspired by their work, we study the combinatorial Yamabe flow on infinitely triangulated surfaces. We prove the existence and uniqueness results under some additional assumptions. We also prove the long-time existence of the extended flow on infinite triangulations as an analog of Ge-Jiang's conclusion \cite{MR3566936}. After that, we prove the convergence result on hexagonal triangulations as an application.

Let $M$ be a surface, $T=(V,E,F)$ be a locally finite triangulation without boundary, where $V,E,F$ denote the set of vertices, edges and faces respectively. A PL (piecewise linear) metric on $T$ is a function $l : E \to \mathbb{R}_+ $ such that for every triangle $\Delta ijk \in  F$, the edge lengths $l_{ij},l_{ik}$ and $l_{jk}$ satisfy the triangle inequality.

\begin{definition}[\cite{MR2100762}]
    Suppose $l, \widetilde{l}$ be two PL metrics on $T=(V,E,F)$ and $u : V \to \mathbb{R} $ be a function. $l, \widetilde{l}$
    are discrete conformal with discrete conformal factor $u$, 
    denoted by $\widetilde{l} = u \ast l$, if for any edge $ij \in E$, 
    \begin{equation}
        \widetilde{l}_{ij} = e^{\frac{1}{2} (u_i + u_j)} l_{ij}.
    \end{equation}
\end{definition}

\begin{definition}[\cite{MR2100762}]
     Given a PL metric $d$ on a locally finite triangulation $T=(V,E,F)$, let $\theta^{jk}_i$ denote the inner angle at the vertex $i$ of the triangle $\Delta ijk$. The \textbf{dicrete Gaussian curvature}
    at each vertex $i$ is defined as
    \begin{equation}
        K_i = 2\pi - \sum_{\Delta ijk \in F} \theta^{jk}_i .
    \end{equation}
    The \textbf{combinatorial Yamabe flow} is defined as follows:
    \begin{align}
        \left\{\begin{aligned}
            \frac{\mathrm{d}u_i}{\mathrm{d}t} = -K_i \\
            u_i(0) = 0. 
        \end{aligned} \right.
        \label{CYF}
    \end{align}
    Here $K_i$ is the curvature of the PL metric $u \ast d$ at time $t$.
\end{definition}

We introduce some assumptions about the initial PL metric. 
\begin{definition}
    Given a PL metric $l$ on $T=(V,E,F)$, let $\theta^{jk}_i$ denote the inner angle at the vertex $i$ in the Euclidean triangle $\Delta ijk \in  F$.

    (a) $l$ is \textbf{uniformly nondegenerate} if there exists a constant $\epsilon > 0$ such that $\theta^{jk}_i\geqslant \epsilon$
    for any $\Delta ijk \in  F$.

    (b) $l$ is \textbf{uniformly Delaunay} if there exists a constant $\epsilon > 0$ such that $\theta^{jk_1}_i + \theta^{jk_2}_i \leqslant \pi - \epsilon$
    for any adjacent triangles $\Delta ijk_1, \Delta ijk_2 \in  F$.
\end{definition}

Now we present our main results.
\begin{theorem}[\textbf{Existence,I}] 
    \label{thm:existence}
    Let $T=(V,E,F)$ be an infinite triangulation with vertex degree bounded by $M$ and let $d$ be a PL metric on $T$ which is uniformly nondegenerate and uniformly Delaunay. Then there exists a positive constant $T_0 = T_0(\epsilon, M)$
    such that the flow (\ref{CYF}) has a solution $u \in C_t^{\infty}(V\times [0, T_0))$.
\end{theorem}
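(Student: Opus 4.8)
The plan is to recast the flow \eqref{CYF} as an autonomous ODE $\dot u = F(u)$ with $F(u) = -K(u)$ in the Banach space $\ell^\infty(V)$ of bounded functions on the vertex set equipped with the supremum norm, and then invoke the Picard--Lindel\"of (Cauchy--Lipschitz) theorem, which is valid in any Banach space. The essential point is that each curvature $K_i$ depends only on the conformal factors $u_j$ with $j$ in the closed star of $i$, so $F$ has uniformly bounded ``bandwidth'' thanks to the degree bound $M$; combined with uniform nondegeneracy this will yield a single Lipschitz constant and a single radius of validity that work simultaneously at every vertex. First I would fix the open set $U=\{u\in\ell^\infty(V):\|u\|_\infty<\delta\}$ on which all triangles of $T$, carrying the metric $u\ast d$, remain nondegenerate with inner angles bounded below by $\epsilon/2$.

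The heart of the argument is a uniform Lipschitz estimate for $F$ on $U$, for which I would compute the Jacobian of $K$ explicitly. Differentiating the law of cosines, using $\partial l_{ab}/\partial u_a=\tfrac12 l_{ab}$ and the projection formula, gives for each triangle $\Delta ijk$ the identity
\begin{equation}
\frac{\partial \theta_i^{jk}}{\partial u_j}=\tfrac12\cot\theta_k^{ij},\qquad \frac{\partial \theta_i^{jk}}{\partial u_i}=-\tfrac12\bigl(\cot\theta_j^{ik}+\cot\theta_k^{ij}\bigr).
\end{equation}
Summing over the two triangles sharing the edge $ij$ then yields $\partial K_i/\partial u_j=-\tfrac12(\cot\theta_{k_1}+\cot\theta_{k_2})$ for $j\sim i$, together with a corresponding diagonal term. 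On $U$ every angle lies in $[\epsilon/2,\pi-\epsilon/2]$, so each cotangent is bounded in absolute value by $\cot(\epsilon/2)$; since $i$ meets at most $M$ triangles, each row of the Jacobian has $\ell^1$-norm at most $C(\epsilon)M$. Hence $DK(u)$ is a bounded operator on $\ell^\infty(V)$ with norm $\le L=L(\epsilon,M)$ for all $u\in U$, and integrating along segments gives $\|K(u)-K(v)\|_\infty\le L\|u-v\|_\infty$. I would invoke the uniformly Delaunay hypothesis here to record that the off-diagonal entries of $-DK$ are in fact nonnegative (equivalently $\cot\theta_{k_1}+\cot\theta_{k_2}\ge0$), reflecting the structure of $F$ as the gradient of a convex functional; for the Lipschitz bound itself, however, it is uniform nondegeneracy that does the work.

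Next I would verify that $U$ is genuinely a set of admissible metrics with a uniform margin, i.e. that $\delta=\delta(\epsilon)$ can be chosen independently of the triangle. Since under $u\ast d$ each edge length is multiplied by $e^{(u_i+u_j)/2}\in[e^{-\delta},e^{\delta}]$, the shape of every triangle is perturbed by a bounded amount, and the inner angles depend continuously on the edge-length ratios with a modulus of continuity that is uniform as long as the triangle stays away from degeneracy. Because $d$ is uniformly nondegenerate, a single $\delta$ forces all angles to remain $\ge\epsilon/2$; on $U$ one also has the crude bound $|K_i|\le(M+2)\pi$, so $F$ is bounded as well as Lipschitz, with all constants depending only on $\epsilon$ and $M$.

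Finally I would apply the Banach-space Picard--Lindel\"of theorem to the bounded Lipschitz field $F$ on $U$: choosing $T_0=T_0(\epsilon,M)$ small enough that $\|F\|_\infty\,T_0<\delta$ keeps the trajectory inside $U$, the contraction-mapping argument produces a unique $C^1$ solution $u\colon[0,T_0)\to\ell^\infty(V)$ with $u(0)=0$. Smoothness in time then follows by bootstrapping: $K$ is Fr\'echet $C^\infty$, its higher derivatives being bounded multilinear operators by the same bandwidth and nondegeneracy bounds, so differentiating $\dot u=-K(u)$ repeatedly upgrades $u$ to $C_t^\infty(V\times[0,T_0))$. The main obstacle throughout is \emph{uniformity}: unlike the finite case, where any nondegenerate metric automatically has a margin, here one must extract a single Lipschitz constant and a single radius $\delta$ valid across infinitely many simplices at once, and this is precisely what uniform nondegeneracy together with the degree bound $M$ delivers.
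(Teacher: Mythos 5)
Your proof is correct, but it takes a genuinely different route from the paper. The paper never works in a Banach space: it solves the flow on a finite exhaustion $V_1 \subset V_2 \subset \cdots$ of $V$ with Dirichlet data $u \equiv 0$ on $\partial V_i$ (where existence is elementary finite-dimensional ODE theory), uses the same a priori bound $|K_j| \leqslant (2+M)\pi$ to keep $\Vert u^{[i]} \Vert_{L^\infty} < \delta_0$ on $[0,T_0)$ so that Lemma \ref{lem:triangle} preserves the angle pinching, then obtains uniform $C^2$ bounds in $t$ from the curvature evolution equation \eqref{cur evolution} together with $\mu_{ij} \leqslant \cot\frac{\epsilon}{2}$, and extracts a limit solution by Arzel\`a--Ascoli and a diagonal argument. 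Your Picard--Lindel\"of argument in $\ell^\infty(V)$ replaces all of this compactness machinery with the uniform Lipschitz estimate for $u \mapsto K(u)$, and that estimate is sound: by Lemma \ref{va1} each row of $DK$ has $\ell^1$-norm at most $C(\epsilon)M$ on your ball $U$, $U$ is convex so integrating along segments gives the Lipschitz bound, and since $|\mathrm{d}u_i/\mathrm{d}t| \leqslant (2+M)\pi$ uniformly in $i$, any componentwise solution staying in $U$ is automatically a strongly differentiable $\ell^\infty$-valued curve (the difference quotients converge uniformly in $i$ by the same Lipschitz bound), so the contraction argument genuinely captures all solutions of the stated problem --- this equivalence is worth one explicit line in a final write-up, since the theorem asks for a solution on $V \times [0,T_0)$, not an abstract curve. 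What your route buys: local uniqueness among solutions staying in $U$ comes for free, whereas the paper needs a separate maximum-principle argument (Theorem \ref{thm:uniqueness}); and you correctly isolate that the uniformly Delaunay hypothesis does no work here --- in both proofs $|\cot\theta| \leqslant \cot\frac{\epsilon}{2}$ already follows from $\theta \in [\frac{\epsilon}{2}, \pi - \frac{\epsilon}{2}]$, so existence needs only uniform nondegeneracy and the degree bound. What the paper's exhaustion method buys: it survives when the Lipschitz estimate fails, which is precisely how the paper proves long-time existence of the extended flow (Theorem \ref{thm:existence2}), where $\widetilde{K}$ is merely continuous and a Peano-type compactness argument is the only option; your method cannot be adapted to that setting. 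Finally, your asserted ``uniform modulus of continuity of the angles in $u$'' is exactly Lemma \ref{lem:triangle}, which the paper proves via cosine-law estimates in the appendix, so you should either cite it or reproduce that computation rather than treat it as evident.
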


Next, we prove the long-time existence of the extended flow. We extend the definition of the discrete curvature $K$ continuously to $\widetilde{K}$, which is well-defined even when the triangle inequalities are not satisfied. Then the flow (\ref{CYF}) can also be naturally extended to 
\begin{align}
        \left\{\begin{aligned}
            \frac{\mathrm{d}u_i}{\mathrm{d}t} = -\widetilde{K}_i \\
            u_i(0) = 0 .
        \end{aligned} \right.
        \label{extended CYF}
\end{align}
The extended flow continues even if the triangle inequalities fail in some finite time. Thus, we can prove that the solution of the extended flow (\ref{extended CYF}) exists for all time $t \geqslant 0$.

\begin{theorem}[\textbf{Existence,II}]
    \label{thm:existence2}
    Let $T=(V,E,F)$ be an infinite triangulation and $d$ be a fixed initial PL metric. Then there exists a global solution $u \in C_t^{1}(V\times [0, \infty))$ for all time $t \geqslant 0$ to the extended flow (\ref{extended CYF}). 
\end{theorem}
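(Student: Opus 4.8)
The plan is to construct the global solution as a limit of solutions to finite-dimensional approximating systems, exploiting the fact that the extended curvature $\widetilde{K}$ is continuous and bounded coordinate by coordinate. The first step is to record the two structural features that make the extended flow tractable. First, the extension $\widetilde{K}_i$ depends continuously on $u$ (this is the content of extending the angle functions continuously across the degeneration locus). Second, since every extended inner angle $\widetilde{\theta}^{jk}_i$ takes values in $[0,\pi]$, the definition $\widetilde{K}_i = 2\pi - \sum_{\Delta ijk}\widetilde{\theta}^{jk}_i$ yields the pointwise bound $2\pi - d_i\pi \leqslant \widetilde{K}_i \leqslant 2\pi$, where $d_i$ is the finite degree of the vertex $i$. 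In particular $\widetilde{K}_i$ is bounded on all of $\mathbb{R}^V$, with a bound depending only on $d_i$. I would also note that $\widetilde{K}_i$ depends only on the finitely many coordinates $u_j$ with $j$ ranging over $i$ and its neighbors, so the infinite system has a local coupling structure.

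Next I would fix an exhaustion $V_1 \subset V_2 \subset \cdots$ of $V$ by finite sets with $\bigcup_n V_n = V$, and on each $V_n$ solve the truncated system $\frac{\mathrm{d}u_i}{\mathrm{d}t} = -\widetilde{K}_i$ for $i \in V_n$, freezing $u_j \equiv 0$ for $j \notin V_n$, with zero initial data. This is a finite-dimensional ODE with continuous, bounded right-hand side; by Peano's theorem it admits a solution $u^{(n)}$, and since the right-hand side is bounded, the solution extends to all $t \geqslant 0$. The bound from the first step gives, for each component, $|u_i^{(n)}(t)| \leqslant C_i t$ together with the Lipschitz estimate $|u_i^{(n)}(t) - u_i^{(n)}(s)| \leqslant C_i |t-s|$, with $C_i = \max\{2\pi,(d_i-2)\pi\}$ depending only on the degree of $i$ and independent of $n$.

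With these uniform-in-$n$ bounds in hand, I would apply the Arzel\`a--Ascoli theorem coordinate by coordinate, together with a diagonal argument over the countable vertex set and an exhaustion of $[0,\infty)$ by compact intervals, extracting a subsequence along which $u^{(n)}_i \to u_i$ uniformly on compact time intervals for every $i \in V$. The local coupling then allows passing to the limit in each equation separately: for fixed $i$, once $n$ is large enough that $i$ and all its neighbors lie in $V_n$, the continuity of $\widetilde{K}_i$ and the uniform convergence of the relevant finitely many coordinates give $\widetilde{K}_i(u^{(n)}) \to \widetilde{K}_i(u)$ uniformly on compact time intervals. Passing to the limit in the integral form $u^{(n)}_i(t) = -\int_0^t \widetilde{K}_i(u^{(n)}(s))\,\mathrm{d}s$ yields $u_i(t) = -\int_0^t \widetilde{K}_i(u(s))\,\mathrm{d}s$, so $u$ solves the extended flow (\ref{extended CYF}); since the integrand is continuous in $t$, each $u_i$ is $C^1$ in $t$, giving $u \in C^1_t(V \times [0,\infty))$.

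The main obstacle is the absence of a uniform bound on the vertex degree, which is not assumed here, in contrast to Theorem \ref{thm:existence}. The constants $C_i$ therefore genuinely depend on $i$, and the approximating flows are not uniformly Lipschitz over all of $V$ simultaneously, which rules out a single global contraction or fixed-point argument. The role of the exhaustion is precisely to localize every estimate to a fixed vertex and its finite neighborhood, where the bounds are uniform in $n$, so that compactness can be applied one coordinate at a time. A secondary point to verify carefully is that the continuous extension $\widetilde{K}$ is indeed continuous across the degeneration locus where a triangle inequality becomes an equality; this is where the convention of setting the extended angle to $\pi$ (and the two opposite angles to $0$) must be checked to glue continuously with the nondegenerate angle formula.
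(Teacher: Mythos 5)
Your proposal is correct, and its overall skeleton coincides with the paper's proof: truncate the flow to a finite exhaustion $V_1 \subset V_2 \subset \cdots$ with frozen values outside, use the degree-dependent bound $|\widetilde{K}_i| \leqslant (2+d_i)\pi$ (your sharper $\max\{2\pi,(d_i-2)\pi\}$ is also fine) to get global-in-time finite-dimensional solutions via Peano plus continuation, derive per-vertex equi-Lipschitz bounds uniform in $n$, and extract a limit by Arzel\`a--Ascoli with a diagonal argument over the countable vertex set and compact time intervals. The one place where you genuinely diverge is the closing regularity step. The paper verifies the limit equation in the weak sense --- testing against smooth $\phi$ with $\phi(0)=\phi(T)=0$ and using dominated convergence --- and then invokes Lemma \ref{lem:lip} (a Lipschitz function with continuous weak derivative is $C^1$) to upgrade $u^{\ast}$ to a classical $C^1$ solution. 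You instead pass to the limit in the integral form $u^{(n)}_i(t) = -\int_0^t \widetilde{K}_i(u^{(n)}(s))\,\mathrm{d}s$, using that $\widetilde{K}_i$ depends only on the finitely many coordinates in the star of $i$, which stay in a fixed compact set where $\widetilde{K}_i$ is uniformly continuous; the limit identity $u_i(t) = -\int_0^t \widetilde{K}_i(u(s))\,\mathrm{d}s$ then gives $C^1$ regularity directly from the fundamental theorem of calculus, with no weak-derivative machinery and no need for Lemma \ref{lem:lip}. Your route is slightly more elementary and self-contained; the paper's route is equivalent but routes the same continuity input through a measure-theoretic lemma. Both hinge on the same two facts you correctly isolated: continuity of the extended angle map across the degeneration locus (Lemma 2.3 in the paper, which you rightly flag as the point to check) and the locality plus boundedness of $\widetilde{K}$, which is what substitutes for the bounded-degree hypothesis of Theorem \ref{thm:existence}.
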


\begin{theorem}[\textbf{Uniqueness}] 
    \label{thm:uniqueness}
    Let $T=(V,E,F)$ be an infinite triangulation with vertex degree bouneded by $M$ and let $d$ be a PL metric on $T$ that is uniformly nondegenerate and uniformly Delaunay.
    Let $u(t), \hat {u}(t)$ be two solutions of the flow (\ref{CYF})
    for $t \in [0, T]$. If $(su + (1-s)\hat {u}) \ast d$ are uniformly Delaunay for $t \in [0, T]$ and $s \in [0,1]$, then $u\equiv \hat {u}$.
\end{theorem}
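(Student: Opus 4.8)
The plan is to turn the coincidence of the two solutions into a uniqueness statement for a linear discrete heat equation and to close it by a Gr\"onwall estimate in $\ell^\infty(V)$. Set $w_i = u_i - \hat u_i$; then $w(0)=0$ and $\dot w_i = -\bigl(K_i(u)-K_i(\hat u)\bigr)$. Writing $u^{(s)} = su+(1-s)\hat u$ and applying the fundamental theorem of calculus along this segment gives
\[ K_i(u)-K_i(\hat u) = \sum_j \Bigl(\int_0^1 \frac{\partial K_i}{\partial u_j}\bigl(u^{(s)}\bigr)\,ds\Bigr) w_j =: \sum_j L_{ij}(t)\,w_j, \]
where for each fixed $i$ the sum ranges only over $i$ and its at most $M$ neighbours. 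Hence $w$ solves the linear system $\dot w = -L(t)\,w$ with $w(0)=0$, and the theorem reduces to proving $w\equiv 0$.

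First I would record the structure of the coefficient matrix. A direct computation in a single Euclidean triangle $\Delta ijk$ gives $\partial\theta_i^{jk}/\partial u_j = \tfrac12\cot\theta_k^{ij}$, and summing over the two triangles $\Delta ijk,\Delta ijl$ sharing the edge $ij$ yields, for $i\ne j$,
\[ \frac{\partial K_i}{\partial u_j} = -\tfrac12\bigl(\cot\theta_k^{ij}+\cot\theta_l^{ij}\bigr) =: -\,w_{ij}, \]
while the scaling invariance $K_i(u+c\mathbf 1)=K_i(u)$ forces $\sum_j \partial K_i/\partial u_j = 0$. Thus at each time $L(t)$ is a symmetric weighted graph Laplacian with vanishing row sums and off-diagonal entries $-\bar w_{ij}(t)$, where $\bar w_{ij}(t)=\int_0^1 w_{ij}(u^{(s)})\,ds$. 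The hypothesis that the segment is uniformly Delaunay, $\theta_k^{ij}+\theta_l^{ij}\le\pi-\epsilon$, gives $\cot\theta_k^{ij}+\cot\theta_l^{ij} = \sin(\theta_k^{ij}+\theta_l^{ij})/(\sin\theta_k^{ij}\sin\theta_l^{ij}) \ge 0$, so every weight $\bar w_{ij}(t)\ge 0$.

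The a priori bound is free from the degree assumption: each vertex meets at most $M$ triangles and each inner angle lies in $(0,\pi)$, so $|K_i|\le M\pi$ everywhere; integrating the flow gives $|u_i(t)|,|\hat u_i(t)|\le M\pi\,t$, hence $\|w(t)\|_\infty\le 2M\pi\,T=:B$ on $[0,T]$ and the conformal factors along the segment obey $\|u^{(s)}(t)\|_\infty\le B$. Granting a uniform upper bound $\bar w_{ij}(t)\le W$, the operator norm of $L(t)$ on $\ell^\infty(V)$ satisfies $\|L(t)\|_\infty = \sup_i\sum_j|L_{ij}| = \sup_i 2\sum_{j\sim i}\bar w_{ij}\le 2MW$, uniformly in $t$. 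Integrating $\dot w=-L(t)w$ and taking the supremum over $V$ then gives $\|w(t)\|_\infty \le 2MW\int_0^t\|w(\tau)\|_\infty\,d\tau$, and since $w(0)=0$ with $\|w\|_\infty$ bounded, Gr\"onwall's inequality forces $w\equiv 0$, that is $u\equiv\hat u$.

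The main obstacle is exactly the uniform upper bound $\bar w_{ij}\le W$, equivalently a uniform lower bound on all inner angles along the segment: the Delaunay condition controls only the sign of the cotangent weights and the angle sum $\theta_k^{ij}+\theta_l^{ij}$, but a single triangle may degenerate ($\theta\to 0$, $\cot\theta\to+\infty$) while remaining Delaunay, so nonnegativity alone does not bound the weights. To rule this out I would establish that every inner angle of $u^{(s)}\ast d$ stays in a fixed interval $[\epsilon',\pi-\epsilon']$, drawing on the uniform nondegeneracy of $d$ together with the a priori bound $\|u^{(s)}\|_\infty\le B$ and the regularity of the solutions; this uniform nondegeneracy along the whole segment is the genuinely delicate ingredient, whereas everything else is the soft linear-ODE machinery above. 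An alternative finish replaces the Gr\"onwall step by a maximum principle for bounded solutions of the graph heat equation, but this route still ultimately requires controlling the weighted degree $\sum_{j\sim i}\bar w_{ij}$, i.e.\ the same angle estimate.
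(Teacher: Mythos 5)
Your architecture matches the paper's almost line for line: set $w=u-\hat u$, linearize $K_i(u)-K_i(\hat u)$ by the fundamental theorem of calculus along the segment $u^{(s)}=su+(1-s)\hat u$, recognize the resulting coefficient matrix as a weighted graph Laplacian with weights $\omega_{ij}(t)=\int_0^1\mu_{ij}(u^{(s)}(t))\,\mathrm{d}s$, use the segment Delaunay hypothesis for $\omega_{ij}\geqslant 0$, and use the a priori bound $|u_i|,|\hat u_i|\leqslant(2+M)\pi T$ from the flow. Your Gr\"onwall finish in $\ell^\infty(V)$ is a legitimate substitute for the paper's discrete maximum principle (Lemma \ref{lem:max principle}), and would even work without the sign of the weights, needing only $\sup_i\sum_{j\sim i}\omega_{ij}\leqslant C$. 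However, you leave the decisive estimate --- the uniform bound $\omega_{ij}\leqslant W$, equivalently a uniform positive lower bound on all inner angles of $u^{(s)}\ast d$ --- as an unproven promise (``I would establish that every inner angle \dots stays in a fixed interval''), and you yourself identify it as the genuinely delicate ingredient. Since that estimate is exactly the core of the paper's proof, this is a genuine gap, not a routine detail.

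Worse, the ingredients you list for closing it (uniform nondegeneracy of $d$, the bound $\Vert u^{(s)}\Vert_{\ell^\infty}\leqslant B$, and regularity) are insufficient on their own: with $d$ equilateral and $u_i=u_j=c$, $u_k=-c$ on a triangle, one has $l_{ij}=e^{c}$, $l_{ik}=l_{jk}=1$, so as $c\uparrow\log 2$ the angle at $k$ tends to $\pi$ and the cotangent weights blow up, even though $\Vert u\Vert_{\ell^\infty}\leqslant\log 2$ and $d$ is maximally nondegenerate. The Delaunay hypothesis on the segment must enter the angle estimate itself, not merely the sign of the weights. The paper's route is: (i) the multiplicative interpolation $l_{ij}(u^{(s)})=l_{ij}(u)^{s}\,l_{ij}(\hat u)^{1-s}$ gives $d_{ij}e^{-(2+M)\pi T}\leqslant l_{ij}(u^{(s)})\leqslant d_{ij}e^{(2+M)\pi T}$; (ii) uniform Delaunay forces every single angle to satisfy $\theta_i^{jk}\leqslant\pi-\epsilon$, whence by the angle sum some other angle of the same triangle exceeds $\epsilon/2$; (iii) the sine law together with $d_{jk}/d_{ik}\geqslant\sin\epsilon$ (from uniform nondegeneracy of $d$) then yields $\sin\theta_i^{jk}\geqslant e^{-(4+2M)\pi T}\sin\epsilon\sin\frac{\epsilon}{2}$, so all angles along the segment lie in $[C_1(\epsilon,M,T),\pi-\epsilon]$ and $\mu_{ij}\leqslant C_2(\epsilon,M,T)$. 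With that step supplied, your Gr\"onwall argument goes through and is a clean alternative to the paper's maximum-principle ending; without it, the proposal does not prove the theorem.
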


As an application, we consider the infinite hexagonal triangulation of the plane where every vertex is of degree 6, denoted by $T_H = (V_H, E_H, F_H)$. The regular metric on $T_H$, denoted by $d_{reg}$, is the constant metric, which yields the tiling of the plane by regular equilateral triangles. Let $\phi: V_H \to \mathbb{R} $ be a conformal factor on $T_H$. We consider the combinatorial Yamabe flow with initial metric $\phi \ast d_{reg}$.
\begin{align}
    \left\{ \begin{aligned}
         \frac{\mathrm{d}u_i}{\mathrm{d}t} = -K_i \\
          u_i(0) = \phi .
    \end{aligned} \right.
    \label{CYF2}
\end{align}
We prove the convergence of the combinatorial Yamabe flow provided that the initial metric is a small perturbation of the regular one.

\begin{theorem}[\textbf{Convergence on hexagonal triangulations}]
    \label{Convergence}
    Let $T_H = (V_H, E_H, F_H)$ be the infinite hexagonal triangulation of the plane and let $d = d_{reg} \equiv 1$ be the regular metric on $T_H$. There is a universal constant $\epsilon_0 > 0$ such that for any conformal factor $\phi:V_H \to \mathbb{R}$ with $\Vert \phi \Vert_{l^2} \leqslant \epsilon_0$, there exists a unique global solution $u(t)$ to the flow (\ref{CYF2}). Moreover, $u(t)$ converges to the regular metric.
\end{theorem}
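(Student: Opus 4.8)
The plan is to treat the flow (\ref{CYF2}) as a small perturbation of its linearization at the regular metric and to exploit the periodic (Floquet--Bloch) structure of the hexagonal lattice. Since six equilateral triangles meet at every vertex, $d_{reg}$ is flat, so $u\equiv 0$ is stationary with $K(0)=0$. Write $L_0=\left(\partial K_i/\partial u_j\right)\big|_{u=0}$; by the symmetry of $T_H$ this is a constant multiple of the standard combinatorial Laplacian of the triangular lattice, hence a bounded, nonnegative, self-adjoint operator on $l^2(V_H)$. Diagonalizing $L_0$ by the discrete Fourier transform on the two-dimensional Brillouin torus turns it into multiplication by a symbol $m(\xi)\geqslant 0$ that vanishes only at $\xi=0$, where it behaves like $c\lvert\xi\rvert^2$. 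In particular $0\in\sigma(L_0)$ but is not an $l^2$-eigenvalue --- the spectral incarnation of the hexagonal rigidity of \cite{MR3356946, MR4389487} --- and $\{m=0\}$ has measure zero, so $e^{-tL_0}\phi\to 0$ in $l^2$ for every $\phi\in l^2$ by dominated convergence in Fourier space.

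First I would establish global existence. Since $\Vert u\Vert_{l^\infty}\leqslant\Vert u\Vert_{l^2}$, a bound $\Vert\phi\Vert_{l^2}\leqslant\epsilon_0$ keeps the initial metric $\phi\ast d_{reg}$ uniformly nondegenerate and uniformly Delaunay, so Theorem~\ref{thm:existence} yields a short-time solution. Differentiating $K$ along the flow gives the curvature heat equation
\begin{equation}
    \frac{\mathrm{d}K}{\mathrm{d}t}=-L(t)K,\qquad L(t)=\left(\frac{\partial K_i}{\partial u_j}\right)\Big|_{u(t)},
\end{equation}
where $L(t)\geqslant 0$ and, under the Delaunay condition, has nonpositive off-diagonal entries and vanishing row sums. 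Hence the discrete maximum principle together with the energy identity $\frac{\mathrm{d}}{\mathrm{d}t}\Vert K\Vert_{l^2}^2=-2\langle K,L(t)K\rangle\leqslant 0$ show that $\Vert K(t)\Vert_{l^\infty}$ and $\Vert K(t)\Vert_{l^2}$ are non-increasing. For $\epsilon_0$ small this keeps $\Vert u(t)\Vert_{l^2}$, and therefore $\Vert u(t)\Vert_{l^\infty}$, inside the regime where the uniform Delaunay and nondegeneracy hypotheses of Theorems~\ref{thm:existence} and \ref{thm:uniqueness} persist, so a continuation argument extends the unique solution to all $t\geqslant 0$.

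To prove convergence I would split $K(u)=L_0u+N(u)$ with $N(u)=O(\Vert u\Vert^2)$ local around each vertex, giving $\Vert N(u)\Vert_{l^2}\lesssim\Vert u\Vert_{l^\infty}\Vert u\Vert_{l^2}$ and $\Vert N(u)\Vert_{l^1}\lesssim\Vert u\Vert_{l^2}^2$, and apply Duhamel's formula
\begin{equation}
    u(t)=e^{-tL_0}\phi-\int_0^t e^{-(t-s)L_0}N(u(s))\,\mathrm{d}s.
\end{equation}
The free term tends to $0$ in $l^2$ by the spectral remark above. For the nonlinear term I would run a bootstrap in a time-weighted norm, feeding in the two-dimensional lattice heat-kernel bounds $\Vert e^{-tL_0}f\Vert_{l^2}\lesssim t^{-1/2}\Vert f\Vert_{l^1}$ and $\Vert e^{-tL_0}f\Vert_{l^2}\leqslant\Vert f\Vert_{l^2}$, together with the smallness of $\phi$, to show that $\Vert u(t)\Vert_{l^2}$ stays comparable to $\epsilon_0$ and that the Duhamel integral converges and decays, whence $u(t)\to 0$, i.e. $u(t)$ converges to the regular metric.

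The main obstacle is the absence of a spectral gap: because $\sigma(L_0)$ reaches $0$ through a two-dimensional continuum, $e^{-tL_0}$ decays only polynomially --- and for generic $\phi$ merely like $t^{-1/2}$ in $l^2$ --- so the quadratic Duhamel term sits exactly at the borderline of integrability in dimension two. Closing the nonlinear estimate therefore requires the sharp $l^1\to l^2$ and $l^2\to l^\infty$ lattice heat-kernel rates and careful use of the quadratic, effectively divergence-form, structure of $N$. Alternatively, one can bypass the rate entirely by combining the monotonicity of $\Vert K\Vert_{l^2}$ with the spectral fact $\ker L_0\cap l^2=\{0\}$ to obtain $K(t)\to 0$, extracting a flat limit metric $u_\infty$ and invoking the hexagonal rigidity of \cite{MR3356946, MR4389487} to identify $u_\infty$ with the regular metric.
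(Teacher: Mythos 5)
Your setup --- linearizing $K$ at $u\equiv 0$ to get a constant-weight lattice Laplacian (this is exactly the paper's $\Delta_c$ with $c=\frac{\sqrt{3}}{3}$) and viewing the flow as a semilinear discrete heat equation --- matches the paper's reduction, but neither of the two steps that carry your proof is closed, and as sketched they cannot be. First, global existence: monotonicity of $\Vert K(t)\Vert_{l^2}$ and $\Vert K(t)\Vert_{l^\infty}$ along $\frac{\mathrm{d}K}{\mathrm{d}t}=-L(t)K$ does not confine $u(t)$. Since $\frac{\mathrm{d}u}{\mathrm{d}t}=-K$, all it yields is $\Vert u(t)\Vert_{l^2}\leqslant\Vert\phi\Vert_{l^2}+t\,\Vert K(\phi)\Vert_{l^2}$, a linearly growing bound, so the assertion that small $\epsilon_0$ ``keeps $u(t)$ inside the regime'' where nondegeneracy and the Delaunay condition persist is unsupported; to keep $u$ small for all time you need time-integrable decay of $K$, which is precisely what is at stake. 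Second, convergence: you yourself identify that the Duhamel bootstrap sits at the borderline --- $\Vert N(u(s))\Vert_{l^1}\lesssim\epsilon_0^2$ fed through $\Vert e^{-(t-s)L_0}\Vert_{l^1\to l^2}\lesssim (t-s)^{-1/2}$ gives a Duhamel integral of size $t^{1/2}$, and since for generic $l^2$ data $e^{-tL_0}\phi\to 0$ with no rate at all (gapless spectrum), there is no time-weighted norm in which the scheme self-improves. The ``careful use of the divergence-form structure of $N$'' is named but never executed, so the main route stops exactly at the hard point. The fallback is also gappy: monotonicity of $\Vert K\Vert_{l^2}$ together with $\ker L_0\cap l^2=\{0\}$ does not give $K(t)\to 0$ (without a spectral gap, integrability in $t$ of $\langle K,L(t)K\rangle$ does not force $\Vert K\Vert_{l^2}\to 0$), and no limit metric $u_\infty$ can be extracted without first proving that $u(t)$ converges, which is the statement being proved.

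The missing idea, and what the paper does instead, is an energy rather than a Duhamel argument: because $F(Du)$ in the reformulation $\frac{\mathrm{d}u}{\mathrm{d}t}=\Delta_c u+F(Du)$ is quadratic in the \emph{differences} $u_j-u_i$ only, pairing the equation with $u$ lets the nonlinearity be absorbed into the Dirichlet form for small data, giving the inequality of Lemma \ref{convergence} (after Ge-Hua-Zhou), $\frac{\mathrm{d}}{\mathrm{d}t}\Vert u(t)\Vert_{l^2}^2+\frac{\sqrt{3}}{3}\mathcal{E}(u(t))\leqslant 0$. Monotonicity of $\Vert u\Vert_{l^2}$ --- not of $\Vert K\Vert$ --- is what keeps the metric uniformly nondegenerate and uniformly Delaunay forever (via Lemma \ref{lem:triangle} and $\Vert u\Vert_{l^\infty}\leqslant\Vert u\Vert_{l^2}$), so the local solution iterates to a global one; uniqueness then comes from Theorem \ref{thm:uniqueness} exactly as you invoke it; and convergence needs no heat-kernel rates: integrating the energy inequality gives $\int_0^\infty\mathcal{E}(u(t))\,\mathrm{d}t<\infty$, hence $\mathcal{E}(u(t))\to 0$ and the metric tends to the regular one. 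To rescue your Duhamel scheme you would have to reproduce this divergence-form cancellation inside the integral estimates; the energy route obtains it in one line.
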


\section{Preliminaries}

\subsection{Variational principles}

Now we introduce the variational principle for the vertex scaling in Euclidean background geometry.

\begin{lemma}[\cite{MR2100762}]
    \label{va1}
    Let $\Delta v_1v_2v_3$ be a Euclidean triangle. Let $u_1, u_2, u_3$ be discrete conformal factors and $\theta_1^{23}, \theta_2^{13}, \theta_3^{12}$ be inner angles of $\Delta v_1v_2v_3$ at the vertices $v_1, v_2, v_3$ respectively. Then for $\{i,j,k\} = \{1,2,3\}$, we have
    \begin{equation}
        \frac{\partial \theta_i^{jk}}{\partial u_j} = \frac{\partial \theta_j^{ik}}{\partial u_i} = \frac{1}{2} \cot \theta_k^{ij}
    \end{equation}
    \begin{equation}
        \frac{\partial \theta_i^{jk}}{\partial u_i} = -\frac{1}{2} (\cot \theta_j^{ik} + \cot \theta_k^{ij})
    \end{equation}
\end{lemma}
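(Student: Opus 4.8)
The plan is to reduce the statement to the Euclidean law of cosines together with the chain rule, exploiting the fact that the conformal factors enter only through the three side lengths of $\Delta v_iv_jv_k$. Write $l_{ij},l_{ik},l_{jk}$ for these lengths and recall that under vertex scaling the lengths are smooth positive functions of $(u_1,u_2,u_3)$ satisfying
\[
\frac{\partial l_{ij}}{\partial u_i}=\frac{\partial l_{ij}}{\partial u_j}=\frac{1}{2}l_{ij},\qquad \frac{\partial l_{ij}}{\partial u_k}=0 .
\]
Hence each angle $\theta_i^{jk}$ is a function of $u$ through the lengths alone, and by the chain rule its $u$-derivatives are determined once the derivatives of $\theta_i^{jk}$ with respect to the side lengths are known.

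Second, I would compute those length-derivatives. The angle $\theta_i^{jk}$ sits between the sides $l_{ij}$ and $l_{ik}$ and is opposite the side $l_{jk}$, so differentiating the law of cosines $l_{jk}^2=l_{ij}^2+l_{ik}^2-2l_{ij}l_{ik}\cos\theta_i^{jk}$ with respect to each length and simplifying with the projection identity $l_{jk}=l_{ij}\cos\theta_j^{ik}+l_{ik}\cos\theta_k^{ij}$ (and its cyclic companions) gives
\[
\frac{\partial\theta_i^{jk}}{\partial l_{jk}}=\frac{l_{jk}}{2A},\qquad
\frac{\partial\theta_i^{jk}}{\partial l_{ij}}=-\frac{l_{jk}\cos\theta_j^{ik}}{2A},\qquad
\frac{\partial\theta_i^{jk}}{\partial l_{ik}}=-\frac{l_{jk}\cos\theta_k^{ij}}{2A},
\]
where $A$ is the area of the triangle. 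From the area identities $A=\tfrac12 l_{ij}l_{jk}\sin\theta_j^{ik}=\tfrac12 l_{ik}l_{jk}\sin\theta_k^{ij}$ I also record the auxiliary relations $\cot\theta_j^{ik}=l_{ij}l_{jk}\cos\theta_j^{ik}/(2A)$ and $\cot\theta_k^{ij}=l_{ik}l_{jk}\cos\theta_k^{ij}/(2A)$.

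Finally I assemble via the chain rule. For the off-diagonal derivative only $l_{ij}$ and $l_{jk}$ depend on $u_j$, so
\[
\frac{\partial\theta_i^{jk}}{\partial u_j}
=\frac{l_{jk}}{2A}\cdot\frac{l_{jk}}{2}-\frac{l_{jk}\cos\theta_j^{ik}}{2A}\cdot\frac{l_{ij}}{2}
=\frac{l_{jk}}{4A}\bigl(l_{jk}-l_{ij}\cos\theta_j^{ik}\bigr),
\]
and the projection identity $l_{jk}-l_{ij}\cos\theta_j^{ik}=l_{ik}\cos\theta_k^{ij}$ followed by the $\cot$ relation collapses this to $\tfrac12\cot\theta_k^{ij}$; interchanging the roles of $i$ and $j$ yields $\partial\theta_j^{ik}/\partial u_i=\tfrac12\cot\theta_k^{ij}$ as well, which establishes the asserted symmetry (one could alternatively read it off from equality of mixed second partials of Luo's angle potential). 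For the diagonal derivative only $l_{ij}$ and $l_{ik}$ depend on $u_i$, and the same manipulation produces $-\tfrac{l_{jk}}{4A}\bigl(l_{ij}\cos\theta_j^{ik}+l_{ik}\cos\theta_k^{ij}\bigr)=-l_{jk}^2/(4A)$, which equals $-\tfrac12(\cot\theta_j^{ik}+\cot\theta_k^{ij})$ by the same relations.

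The computation is elementary and presents no genuine obstacle; the only real care is organizational — keeping straight which of the three lengths carries each factor $u_m$, and applying each projection identity in the correct direction. The one place to double-check is the sign and index pattern in the three angle-length derivatives, since an error there would propagate through both final formulas.
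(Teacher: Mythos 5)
Your proof is correct, and it is exactly the ``direct computation'' the paper alludes to without writing out: the paper states only that Lemma \ref{va1} follows from a direct computation (citing Luo), and your chain-rule argument through the length derivatives $\partial l_{ij}/\partial u_i = \tfrac12 l_{ij}$, the law of cosines, and the projection identity checks out in every sign and index, including the final reductions to $\tfrac12\cot\theta_k^{ij}$ and $-\tfrac{l_{jk}^2}{4A} = -\tfrac12(\cot\theta_j^{ik}+\cot\theta_k^{ij})$. (The diagonal formula also follows in one line from your off-diagonal one via $\theta_i^{jk}+\theta_j^{ik}+\theta_k^{ij}=\pi$, but your direct computation is equally valid.)
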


The proof of Lemma \ref{va1} follows from a direct computation.

Given an undirected weighted graph $G=(V, E, \omega)$, where $V$ and $E$ are the sets of vertices and edges, respectively, and $\omega:E \to [0, \infty)$ is the edge weight function. We write $i \thicksim j$ if the vertex $v_i$ is adjacent to $v_j$. For a vertex function $f:V \to \mathbb{R} $, the discrete Laplacian $\Delta_{\omega}$ associated with the edge weight is given by
\begin{equation}
    \Delta_{\omega}f_i = \sum_{j:j \thicksim i}\omega_{ij}(f_j-f_i), \forall i \in V.
\end{equation}

Next, we introduce the curvature evolution equation under the combinatorial Yamabe flow.
\begin{proposition}[\cite{MR2100762}]
     Fix a PL metric $d$ on a locally finite 
    triangulation $T=(V,E,F)$ and let $u: V \to \mathbb{R}$ be a discrete 
    conformal factor. Then
    \begin{equation}
        \mathrm{d}K_i = - \sum_{ij \in E} \mu_{ij} (\mathrm{d}u_j - \mathrm{d}u_i)
    \end{equation}
    and here
    \begin{equation}
        \mu_{ij} = \frac{1}{2}(\cot \theta^{ij}_{k_1}+ \cot \theta^{ij}_{k_2}) \label{weight}
    \end{equation}
    for adjacent triangles $\Delta ijk_1, \Delta ijk_2 \in  F$.
    And under combinatorial Yamabe flow, one has
    \begin{align}
        \frac{\mathrm{d}K_i}{\mathrm{d}t} &= \sum_{ij \in E} \mu_{ij} (K_j - K_i) \nonumber \\
                        &= \Delta_{\mu(t)}K_i
        \label{cur evolution}
    \end{align}
    where $\Delta_{\mu(t)}$ is the discrete Laplacian operator defined on 
    the weighted graph $G = (V,E,\mu(t))$.
\end{proposition}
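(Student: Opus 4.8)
The plan is to establish the differential formula by a direct application of Lemma \ref{va1}, and then deduce the evolution equation by substituting the flow equation. First I would start from the definition $K_i = 2\pi - \sum_{\Delta ijk \in F} \theta^{jk}_i$ and take the total differential, obtaining $\mathrm{d}K_i = -\sum_{\Delta ijk \in F} \mathrm{d}\theta^{jk}_i$, where the sum runs over the triangles incident to the vertex $i$. Because $T$ is locally finite, the degree of $i$ is finite, so this is a finite sum for each fixed $i$; hence no convergence issue arises even on the infinite triangulation, and $K_i$ depends only on the conformal factors $u_j$ of vertices in the combinatorial neighborhood of $i$.

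Next I would expand each $\mathrm{d}\theta^{jk}_i$ as a function of the three factors $u_i, u_j, u_k$ of the vertices of $\Delta ijk$, using both formulas of Lemma \ref{va1}. The derivative in $u_i$ contributes $-\tfrac{1}{2}(\cot\theta^{ik}_j + \cot\theta^{ij}_k)$, while the derivatives in $u_j$ and $u_k$ contribute $\tfrac{1}{2}\cot\theta^{ij}_k$ and $\tfrac{1}{2}\cot\theta^{ik}_j$ respectively. Collecting these, the differential of the angle at $i$ organizes itself edge-by-edge:
\[
\mathrm{d}\theta^{jk}_i = \tfrac{1}{2}\cot\theta^{ij}_k\,(\mathrm{d}u_j - \mathrm{d}u_i) + \tfrac{1}{2}\cot\theta^{ik}_j\,(\mathrm{d}u_k - \mathrm{d}u_i),
\]
so each triangle attaches to its two edges $ij$ and $ik$ at $i$ a weight equal to half the cotangent of the angle opposite that edge.

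The key combinatorial step is then to reindex $\sum_{\Delta ijk} \mathrm{d}\theta^{jk}_i$ as a sum over the edges incident to $i$ rather than over triangles. Since $T$ has no boundary, every edge $ij$ lies in exactly two triangles $\Delta ijk_1$ and $\Delta ijk_2$, so the term $\tfrac{1}{2}\cot\theta^{ij}_{k_1}(\mathrm{d}u_j - \mathrm{d}u_i)$ from the first and $\tfrac{1}{2}\cot\theta^{ij}_{k_2}(\mathrm{d}u_j - \mathrm{d}u_i)$ from the second combine into $\mu_{ij}(\mathrm{d}u_j - \mathrm{d}u_i)$ with $\mu_{ij} = \tfrac{1}{2}(\cot\theta^{ij}_{k_1} + \cot\theta^{ij}_{k_2})$. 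This yields the asserted formula $\mathrm{d}K_i = -\sum_{ij \in E} \mu_{ij}(\mathrm{d}u_j - \mathrm{d}u_i)$.

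Finally, along the combinatorial Yamabe flow I would substitute $\mathrm{d}u_i/\mathrm{d}t = -K_i$ into this differential formula, which immediately gives $\frac{\mathrm{d}K_i}{\mathrm{d}t} = \sum_{ij \in E} \mu_{ij}(K_j - K_i) = \Delta_{\mu(t)}K_i$, by the definition of the weighted Laplacian. I expect the only point demanding genuine care to be the bookkeeping in the edge-reindexing: one must track the angle labels consistently so that the angle opposite $ij$ (namely $\theta^{ij}_{k_1}$, resp. $\theta^{ij}_{k_2}$) is the one whose cotangent appears, and verify that each incident edge is counted exactly once while collecting both of its flanking cotangents. Everything else reduces to the chain rule together with Lemma \ref{va1}.
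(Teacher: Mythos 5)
Your proof is correct and is exactly the standard derivation: the paper itself states this proposition with a citation to Luo \cite{MR2100762} and gives no proof, and your computation---total differential of $K_i$, the edge-wise regrouping $\mathrm{d}\theta^{jk}_i = \tfrac{1}{2}\cot\theta^{ij}_k(\mathrm{d}u_j-\mathrm{d}u_i)+\tfrac{1}{2}\cot\theta^{ik}_j(\mathrm{d}u_k-\mathrm{d}u_i)$ via Lemma \ref{va1}, reindexing over the two triangles flanking each edge, then substituting $\mathrm{d}u_i/\mathrm{d}t=-K_i$---is precisely the intended argument. Your attention to local finiteness and the no-boundary hypothesis (so each edge lies in exactly two triangles) is appropriate and complete.
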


\subsection{Extension of the flow}
In this section, we extend the definition of inner angles so that it is still well-defined even when the triangle inequalities are not satisfied. Set
\begin{equation*}
    \Omega = \{(l_1, l_2,l_3) \in \mathbb{R}_{>0}^3 | l_1 + l_2 > l_3, l_1 + l_3 > l_2, l_2 + l_3 > l_1\}.
\end{equation*}
It is clear that there exists an Euclidean triangle with edge lengths $l_1, l_2, l_3$ for $l = (l_1, l_2, l_3) \in \Omega$, and we denote by $\theta_1, \theta_2$ and $\theta_3$ the corresponding inner angles.  We call the map $\theta:\Omega \to (0, \pi)^3$ that maps $(l_1, l_2, l_3)$ to $(\theta_1, \theta_2, \theta_3)$  the inner angle map. Next, we extend $\theta$ continuously to $\widetilde{\theta}: \mathbb{R}_{>0}^3 \to [0, \pi]^3$ as follows.

\begin{definition}[\cite{MR2862158}]
    \label{def extension1}
    Let $l = (l_1, l_2,l_3) \in \mathbb{R}_{>0}^3$ be the lengths of the edges. If $l \in \Omega$, then define $\widetilde{\theta}(l_1, l_2, l_3) = \theta(l_1, l_2, l_3)$ and $l$ forms a nondegenerate Euclidean triangle with inner angles $\widetilde{\theta}$. If $l$ does not satisfy the triangle inequalities, say $l_i \geqslant l_j + l_k$ for example, then define $\widetilde{\theta}_i = \pi$, $\widetilde{\theta}_j =\widetilde{\theta}_k = 0$ where $\{i,j,k\} = \{1,2,3\}$. If the latter condition holds, we call such a triangle a generalized triangle.
\end{definition}
Thus, the inner angle map $\theta:\Omega \to (0, \pi)^3$ is extended to $\widetilde{\theta}: \mathbb{R}_{>0}^3 \to [0, \pi]^3$. It remains to show that $\widetilde{\theta}$ is continuous in $\mathbb{R}_{>0}^3$.

\begin{lemma}
    The extended inner angle map $\widetilde{\theta}$ is continuous in $\mathbb{R}_{>0}^3$ with $\widetilde{\theta}_1 + \widetilde{\theta}_2 + \widetilde{\theta}_3 = \pi$.
\end{lemma}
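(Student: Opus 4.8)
The plan is to separate the two assertions. The angle-sum identity $\widetilde{\theta}_1 + \widetilde{\theta}_2 + \widetilde{\theta}_3 = \pi$ comes almost for free: on $\Omega$ it is the classical fact that the inner angles of a Euclidean triangle sum to $\pi$, while on the complement (where $l_i \geqslant l_j + l_k$ for some $i$) Definition \ref{def extension1} assigns $(\widetilde{\theta}_i, \widetilde{\theta}_j, \widetilde{\theta}_k) = (\pi, 0, 0)$, whose sum is again $\pi$. So once continuity is established, nothing further is needed for the sum. I would also record at the outset that for positive lengths at most one triangle inequality can be violated or be an equality at a time: if both $l_1 \geqslant l_2 + l_3$ and $l_2 \geqslant l_1 + l_3$ held, adding them would force $l_3 \leqslant 0$. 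Hence the case division in Definition \ref{def extension1} is unambiguous and its three degenerate faces are disjoint in $\mathbb{R}_{>0}^3$.

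For continuity, the key idea is to rewrite $\widetilde{\theta}$ in a single closed form valid on all of $\mathbb{R}_{>0}^3$ via the law of cosines. For $\{i,j,k\} = \{1,2,3\}$ set
\begin{equation*}
    c_i(l) = \frac{l_j^2 + l_k^2 - l_i^2}{2 l_j l_k},
\end{equation*}
a rational, hence smooth, function of $l \in \mathbb{R}_{>0}^3$. I claim that
\begin{equation*}
    \widetilde{\theta}_i(l) = \arccos\bigl( \max\{-1, \min\{1, c_i(l)\}\}\bigr),
\end{equation*}
that is, $\widetilde{\theta}_i$ is the arccosine of $c_i$ clamped to $[-1,1]$. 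On $\Omega$ this is just the law of cosines, with $c_i \in (-1,1)$ returning the genuine angle. To check the degenerate case I would use the factorization $l_i^2 + l_k^2 - l_j^2 - 2 l_i l_k = (l_i - l_k - l_j)(l_i - l_k + l_j)$ together with its cyclic analogues: when $l_i \geqslant l_j + l_k$ one gets $c_i \leqslant -1$ while $c_j \geqslant 1$ and $c_k \geqslant 1$, so clamping and applying $\arccos$ returns exactly $(\pi, 0, 0)$, matching Definition \ref{def extension1}, with equality precisely on the boundary face $l_i = l_j + l_k$.

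With this reformulation continuity is immediate: $c_i$ is continuous on $\mathbb{R}_{>0}^3$, the clamp $t \mapsto \max\{-1, \min\{1, t\}\}$ is continuous, and $\arccos$ is continuous on $[-1,1]$, so each $\widetilde{\theta}_i$ is a composition of continuous maps. I expect the only real work, and the main obstacle, to be the bookkeeping that establishes the displayed formula: verifying via the factorization that $c_i$ crosses $-1$ exactly as $l$ leaves $\Omega$ through the face $l_i = l_j + l_k$, while the partner quantities $c_j, c_k$ simultaneously cross $+1$. This is what guarantees that the clamped formula agrees with the piecewise definition not only in the interior of each region but continuously across the degeneracy boundary, where approaching from inside $\Omega$ and from the generalized-triangle side must both yield $(\pi, 0, 0)$.
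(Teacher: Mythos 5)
Your proof is correct, and it takes a genuinely different route from the paper. The paper's own proof is a boundary-limit check: it observes that continuity is only in question on $\partial\Omega$, takes a boundary point $L$ with $L_1 = L_2 + L_3$, and asserts $\lim_{l \to L} \widetilde{\theta}_1 = \pi$ and $\lim_{l \to L} \widetilde{\theta}_2 = \lim_{l \to L} \widetilde{\theta}_3 = 0$ directly from the cosine laws, leaving the computation implicit. You instead produce a single closed-form expression, $\widetilde{\theta}_i = \arccos$ of the cosine ratio $c_i$ clamped to $[-1,1]$, valid on all of $\mathbb{R}_{>0}^3$; the identifications $c_i + 1 = \frac{(l_j + l_k - l_i)(l_j + l_k + l_i)}{2 l_j l_k}$ and $c_j - 1 = \frac{(l_i - l_k - l_j)(l_i - l_k + l_j)}{2 l_i l_k}$ confirm that the clamp activates exactly on the degenerate region, so continuity becomes a pure composition statement with no limit argument at all. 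Both arguments ultimately rest on the law of cosines, but yours buys several things the paper glosses over: all three degenerate faces are handled symmetrically at once rather than ``without loss of generality''; the well-definedness of the piecewise definition (your remark that at most one triangle inequality can fail or be an equality for positive lengths) is made explicit; and the angle-sum identity $\widetilde{\theta}_1 + \widetilde{\theta}_2 + \widetilde{\theta}_3 = \pi$ is checked in both regimes rather than left tacit. The cost is a bit of front-loaded algebra, but the result is self-contained and arguably more robust than the paper's sketch.
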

\begin{proof}
    It is only necessary to show that $\widetilde{\theta}$ is continuous on the boundary of $\Omega$. Let $L = (L_1,L_2,L_3) \in \overline{\Omega} - \Omega $ be a boundary point. Without loss of generality, we assume that $L_1 = L_2 + L_3$. Then the continuity
    \begin{equation*}
        \lim_{l \to L} \widetilde{\theta}_1 = \pi \; and \;  \lim_{l \to L} \widetilde{\theta}_2 = \lim_{l \to L} \widetilde{\theta}_3 = 0 
    \end{equation*} 
    follows from the cosine laws.
\end{proof}

Given a PL metric $d$ on a locally finite triangulation $T = (V,E,F)$ and a discrete conformal factor $u:V \to \mathbb{R}$, we call $u \ast d$ a pseudo PL metric if there exists a generalized triangle under the metric $u \ast d$ that does not satisfy the triangle inequalities. Naturally, we extend the definition of discrete curvature on a pseudo PL metric
\begin{equation}
    \widetilde{K}_i = 2\pi - \sum_{\Delta ijk \in F} \widetilde{\theta}^{jk}_i.
\end{equation} 
The extended combinatorial Yamabe flow is also naturally defined:
\begin{align}
        \left\{\begin{aligned}
            \frac{\mathrm{d}u_i}{\mathrm{d}t} = -\widetilde{K}_i \\
            u_i(0) = 0 .
        \end{aligned} \right.
        \label{extended CYF2}
\end{align}
Here $\widetilde{K}_i$ is the extended curvature of the PL metric $u \ast d$ at time $t$.

\section{Proof of main results}
First, we introduce a lemma from elementary geometry that helps us to compute the existence time of the flow.

\begin{lemma}
    \label{lem:triangle}
    For any Euclidean triangle $\Delta ijk$, we denote by $\theta_i, \theta_j$ and $\theta_k$ the inner angles of the triangle, and by $d_{ij}, d_{ik}$ and $ d_{jk}$ the edge lengths. Assume that $\theta_i, \theta_j, \theta_k \geqslant \epsilon > 0$. Then there exists a constant $\delta = \delta(\epsilon)$ such that for every function $u:V \to \mathbb{R} $ satisfying 
    $\Vert u \Vert_{L^\infty} \leqslant \delta $, the inner angles $\widetilde{\theta_i}, \widetilde{\theta_j}$ and $\widetilde{\theta_k}$ under the conformal metric $\widetilde{d} = u \ast d$ satisfy $|\widetilde{\theta_r} - \theta_r| \leqslant \frac{\epsilon}{2}$ for $r \in \{i,j,k\}$.  
\end{lemma}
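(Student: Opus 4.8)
The plan is to view the perturbed angles as the endpoint of a one-parameter deformation of the conformal factor and to control their variation using the explicit angle derivatives of Lemma \ref{va1}, fed into a continuity (bootstrap) argument. The decisive feature is that these derivatives are expressed purely through cotangents of the inner angles: as long as every angle stays bounded away from $0$ and $\pi$, the derivatives are bounded by a quantity depending only on $\epsilon$. This is precisely what will make the resulting $\delta$ uniform over \emph{all} triangles satisfying the hypothesis, independent of the particular edge lengths. Concretely, I would set up the linear path $u(t) = tu$ for $t \in [0,1]$, joining the zero factor to the given $u$, and write $g_r(t) = \widetilde{\theta}_r(u(t))$ for $r \in \{i,j,k\}$. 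At $t = 0$ the metric is $d$, so $g_r(0) = \theta_r \geqslant \epsilon$; since $\theta_i + \theta_j + \theta_k = \pi$ with each term at least $\epsilon$, each initial angle in fact lies in $[\epsilon, \pi - 2\epsilon]$.

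Wherever the triangle is nondegenerate, Lemma \ref{va1} gives $g_r'(t) = \sum_{s \in \{i,j,k\}} \frac{\partial \theta_r}{\partial u_s}\, u_s$, and summing the absolute values of the three partial derivatives shows $|g_r'(t)| \leqslant (|\cot \theta_p| + |\cot \theta_q|)\,\Vert u \Vert_{L^\infty}$, where $\{p,q\} = \{i,j,k\} \setminus \{r\}$. If every angle lies in $[\epsilon/2, \pi - \epsilon]$, then $|\cot| \leqslant \cot(\epsilon/2)$, so $|g_r'(t)| \leqslant 2\cot(\epsilon/2)\,\Vert u \Vert_{L^\infty}$. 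Now I run the bootstrap: let $T^\ast = \sup\{t \in [0,1] : |g_r(s) - \theta_r| \leqslant \epsilon/2 \text{ for all } s \leqslant t \text{ and all } r\}$, which is positive by continuity. On $[0, T^\ast]$ the angles lie in $[\epsilon/2, \pi - 3\epsilon/2] \subset [\epsilon/2, \pi - \epsilon]$, so the triangle stays strictly nondegenerate, Lemma \ref{va1} applies with $\widetilde{\theta} = \theta$ along the path, and integrating the derivative bound yields $|g_r(t) - \theta_r| \leqslant 2\cot(\epsilon/2)\,\delta$ whenever $\Vert u \Vert_{L^\infty} \leqslant \delta$. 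Choosing $\delta = \delta(\epsilon) < \frac{\epsilon}{4}\tan(\epsilon/2)$ forces the right-hand side to be strictly less than $\epsilon/2$, so the closed constraint is never saturated; hence $T^\ast = 1$, and evaluating at $t = 1$ gives the claimed estimate $|\widetilde{\theta}_r - \theta_r| \leqslant \epsilon/2$.

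The main obstacle is justifying the derivative bound itself, since the inequality $|g_r'| \leqslant 2\cot(\epsilon/2)\,\Vert u \Vert_{L^\infty}$ presupposes that the angles have not yet drifted toward $0$ or $\pi$ — which is exactly the conclusion one is trying to establish. A naive global Lipschitz estimate is therefore circular, and the continuity/bootstrap argument is what breaks the circle. Care is needed at two points: verifying that the triangle remains nondegenerate throughout the deformation (so that Lemma \ref{va1} is legitimately applicable and $\widetilde{\theta}$ coincides with the genuine angle map along the whole path), and checking that the final $\delta$ depends on $\epsilon$ alone. The latter holds because the cotangent bound involves only the angles, which are scale-invariant and controlled by $\epsilon$, so the argument is insensitive to the individual edge lengths $d_{ij}, d_{ik}, d_{jk}$.
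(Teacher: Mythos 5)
Your proof is correct, but it follows a genuinely different route from the paper. The paper's argument is a direct computation: it uses the sine law to bound the edge-length ratios $d_{ij}/d_{ik}$ etc.\ between $\sin\epsilon$ and $1/\sin\epsilon$, then estimates the cosine-law expression term by term to get $|\cos\theta_r - \cos\widetilde{\theta}_r| \leqslant \frac{3(e^{4\delta}-1)}{2\sin^2\epsilon}$, and finally picks the explicit value $\delta = \frac{1}{4}\log\bigl(1 + \frac{2\sin^2\epsilon}{3}(1-\cos\frac{\epsilon}{4})\bigr)$, which behaves like $C\epsilon^4$ for small $\epsilon$. You instead deform along the path $u(t)=tu$ and control $g_r'(t)$ via the cotangent formulas of Lemma \ref{va1}, closing the obvious circularity with an open--closed bootstrap; your handling of the two delicate points is right (the continuity of the extended angle map $\widetilde{\theta}$ makes $T^\ast>0$ legitimate, and angles in $[\epsilon/2,\pi-3\epsilon/2]$ force the triple into $\Omega$, so the genuine angle map is smooth along the whole path and the chain rule applies). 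Each approach buys something: yours is scale-invariant by construction, sidesteps the paper's implicit step of converting a cosine bound into an angle bound (where $\arccos$ degenerates near $\pm 1$, so the choice $1-\cos\frac{\epsilon}{4}$ requires a small extra check that the paper elides with ``immediately''), and yields a quantitatively better threshold $\delta \sim \frac{\epsilon}{4}\tan\frac{\epsilon}{2} \sim \epsilon^2/8$ versus the paper's $\epsilon^4$; the paper's computation is more elementary, requiring no differentiation along paths or continuity argument, and produces a fully explicit closed-form $\delta$ in two lines. One stylistic note: since the paper invokes this lemma with the non-strict hypothesis $\Vert u\Vert_{L^\infty}\leqslant\delta$, you should fix $\delta = \frac{\epsilon}{8}\tan\frac{\epsilon}{2}$ (say) rather than leave it as a strict supremum, but this is cosmetic.
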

Since the proof is elementary, we leave the proof to the Appendix \ref{appendix}.

Let $T=(V,E,F)$ be a locally finite triangulation on a surface $M$ and $d$
be a PL metric on $T$. Pick a sequence of finite subsets $V_i$ of $V$ 
such that
$$
V_i \subset V_{i+1},\: \bigcup_{i=1}^{\infty} V_i = V.
$$
Consider the Yamabe flow on $V_i$,
\begin{align}
        \left\{\begin{aligned}
            &\frac{\mathrm{d}u_j^{[i]}}{\mathrm{d}t} = -K_j,   \;  \forall j \in Int(V_i)\\
            &u_j^{[i]}(0) = 0, \; \forall (j,t) \in (V_i \times \{0\}) \cup (\partial V_i \times (0, \infty)).
        \end{aligned} \right.
\end{align}
The short-time existence of the above equations follows from basic ODE theory since $V_i$ is finite.

\begin{proof}[\textbf{Proof of Theorem \ref{thm:existence}}]
    By the above lemma, there exists a constant $\delta_0 = \delta_0(\epsilon)$
    such that for every function $u:V \to \mathbb{R} $ satisfying 
    $\Vert u \Vert_{L^\infty}<\delta_0 $, $u \ast d$ satisfies triangle
    inequalities and for any $\Delta ijk \in  F$,
    \begin{equation}\label{3.2}
        \theta^{jk}_i(u \ast d)\geqslant \frac{\epsilon}{2},
    \end{equation}
    for any adjacent triangles $\Delta ijk_1, \Delta ijk_2 \in  F$, we have
    \begin{equation}\label{3.3}
        \theta^{jk_1}_i(u \ast d) + \theta^{jk_2}_i(u \ast d) \leqslant \pi - \frac{\epsilon}{2}.
    \end{equation}
    
    Fix a vertex $v_j \in V$, choose a sufficiently large $i$ such that $v_j \in Int(V_i)$. By the definition of $K_i$, one has
    \begin{equation}\label{3.4}
        |K_j(u)| \leqslant (2+deg(v_j))\pi \leqslant (2+M)\pi. 
    \end{equation}
    Therefore, there exists a positive constant $T_0 = T_0(\epsilon, M)$ 
    such that in the time interval $t \in [0, T_0)$, 
    \begin{equation*}
        |u_j^{[i]}(t)| \leqslant (2 + M)\pi T_0 < \delta_0.
    \end{equation*}
Therefore, estimates \eqref{3.2} and \eqref{3.3} hold for $u(t)*d, \forall t\in[0,T_0).$ 
    By the curvature evolution formula (\ref{cur evolution}),
    \begin{equation}\label{3.6}
        \mu_{ij}(t) \leqslant \cot \frac{\epsilon}{2}, \; \forall t \in [0, T_0).
    \end{equation}
    Then by (\ref{3.4}) and (\ref{3.6}), we have
    \begin{equation*}
        \sup_i \Vert u_j^{[i]}(t)\Vert_{C^2[0, T_0)} \leqslant C(\epsilon, M).
    \end{equation*}
    Then we can apply the Arzela-Ascoli theorem and the standard diagonal argument. There is a subsequence of $u^{[i]}(t)$
    (still denoted by $u^{[i]}(t)$) such that $\forall j \in V, \; u_j^{[i]}(t) \to u_j^{\ast}(t)$
    in $C^1[0, T_0)$ for some $u^{\ast}: V \to \mathbb{R}$, which is the solution of (\ref{CYF}) in the time
    interval $[0, T_0)$. Moreover, the smoothness of $u^{\ast}(t)$ with respect to $t$ follows from the smoothness of $K(u)$.

\end{proof}

For extended flow (\ref{extended CYF}) on a finite triangulation, the long-time existence is ensured by Peano's
existence theorem in the classical ODE theory \cite{MR3566936}. We first establish the long-time existence of the solution to the extended flow (\ref{extended CYF}) on an infinite triangulation. To see the result, we first introduce the following lemma.

\begin{lemma}
    \label{lem:lip}
    Let $f:\mathbb{R}^n \to \mathbb{R}$ be a Lipschitz function. Assume that $g$ is a continuous function and $\nabla f = g$ in the weak sense, then $f \in C^1$ and $\nabla f = g$ everywhere.
\end{lemma}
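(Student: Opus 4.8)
The plan is to use mollification to reduce the statement to the classical fact that a locally uniform limit of $C^1$ functions whose gradients also converge locally uniformly is itself $C^1$, with gradient the limit of the gradients. Let $\rho \in C_c^\infty(\mathbb{R}^n)$ be a standard nonnegative mollifier with $\int \rho = 1$ supported in the unit ball, and set $\rho_\epsilon(x) = \epsilon^{-n}\rho(x/\epsilon)$. Define $f_\epsilon = f * \rho_\epsilon \in C^\infty(\mathbb{R}^n)$. Since $f$ is Lipschitz, it is in particular continuous and locally bounded, so $f_\epsilon \to f$ locally uniformly as $\epsilon \to 0$.

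The key computation is that the classical gradient of the mollification equals the mollification of the weak gradient. For each coordinate $i$ and each $x$, I would differentiate under the integral sign and then transfer the derivative onto $f$ using the definition of the weak derivative (valid because $y \mapsto \rho_\epsilon(x-y)$ is a test function), obtaining
\begin{equation*}
    \partial_i f_\epsilon(x) = \int f(y)\,\partial_{x_i}\rho_\epsilon(x-y)\,dy = \int g_i(y)\,\rho_\epsilon(x-y)\,dy = (g_i * \rho_\epsilon)(x).
\end{equation*}
Since $g$ is continuous, $g_i * \rho_\epsilon \to g_i$ locally uniformly as $\epsilon \to 0$.

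To conclude, I would integrate along each coordinate direction. For the smooth functions $f_\epsilon$ the fundamental theorem of calculus gives
\begin{equation*}
    f_\epsilon(x + h e_i) - f_\epsilon(x) = \int_0^h (g_i * \rho_\epsilon)(x + t e_i)\,dt .
\end{equation*}
Letting $\epsilon \to 0$ and invoking the two locally uniform convergences above (the right-hand side over the compact segment $\{x+te_i : t \in [0,h]\}$) yields
\begin{equation*}
    f(x + h e_i) - f(x) = \int_0^h g_i(x + t e_i)\,dt
\end{equation*}
for all $x$ and $h$. Because $g_i$ is continuous, differentiating in $h$ shows that the partial derivative $\partial_i f(x)$ exists and equals $g_i(x)$ for every $x$. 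As all partial derivatives exist and are continuous, $f \in C^1$ and $\nabla f = g$ everywhere.

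The argument is essentially routine, and no single step is a genuine obstacle. The only point requiring care is the justification that $\partial_i(f * \rho_\epsilon) = g_i * \rho_\epsilon$, which is exactly where the weak-derivative hypothesis enters, together with the observation that the Lipschitz regularity of $f$ upgrades the usual $L^1_{loc}$ convergence of mollifications to \emph{locally uniform} convergence, so that the passage to the limit in the integral identity is legitimate.
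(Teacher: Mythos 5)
Your proof is correct. Note that the paper does not actually prove this lemma at all: it simply cites \cite[Section 5.8]{MR1625845} (Evans), and the mollification argument you give --- $\partial_i(f\ast\rho_\epsilon)=g_i\ast\rho_\epsilon$ via the weak-derivative identity against the test function $\rho_\epsilon(x-\cdot)$, locally uniform convergence of both $f_\epsilon$ and $g_i\ast\rho_\epsilon$, passage to the limit in the fundamental theorem of calculus along coordinate segments, and differentiation of $h\mapsto\int_0^h g_i(x+te_i)\,dt$ using continuity of $g_i$ --- is exactly the standard proof underlying that reference, so you have supplied the details the paper omits. One small remark: the Lipschitz hypothesis is not what upgrades $f\ast\rho_\epsilon\to f$ to locally uniform convergence; mere continuity of $f$ already does that (uniform continuity on slightly enlarged compact sets), and since $\rho_\epsilon$ has compact support the convolution is well defined for any $f\in L^1_{loc}$. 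So your argument in fact proves the slightly stronger statement with ``Lipschitz'' weakened to ``continuous''; the Lipschitz assumption is simply what is available in the paper's application, where the approximating solutions $u_j^{[i]}$ have uniformly bounded time derivatives.
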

The proof of this lemma can be found in \cite[Section 5.8]{MR1625845}. 

\begin{proof}[\textbf{Proof of Theorem \ref{thm:existence2}}]
    Consider the extended Yamabe flow on $V_i$
    \begin{align}\label{3.8}
        \left\{\begin{aligned}
            &\frac{\mathrm{d}u_j^{[i]}}{\mathrm{d}t} = -\widetilde{K}_j,   \;  \forall j \in Int(V_i)\\
            &u_j^{[i]}(0) = 0, \; \forall (j,t) \in (V_i \times \{0\}) \cup (\partial V_i \times (0, \infty)).
        \end{aligned} \right.
    \end{align}
    Fix a vertex $v_j \in V$, choose a sufficiently large $i$ such that $v_j \in Int(V_i)$. By the definition of $\widetilde{K}_i$, one has
    \begin{equation}\label{3.9}
        |\widetilde{K}_j(u)| \leqslant (2+deg(v_j))\pi .
    \end{equation}
    Fix the time interval $[0, T]$ for any $T>0$. By \eqref{3.8} and \eqref{3.9}
    \begin{equation*}
        \sup_i \Vert u_j^{[i]}(t)\Vert_{C^1[0, T)} \leqslant C(deg(v_j), T).
    \end{equation*}
    Then by the Arzela-Ascoli theorm and the standard diagonal argument, there exists a subsequence of $u^{[i]}(t)$(still denoted by $u^{[i]}(t)$) such that $\forall j \in V, \; u_j^{[i]}(t) \to u_j^{\ast}(t)$ uniformly for some $u^{\ast}: V \to \mathbb{R}$. It remains to show that $u^{\ast} \in C_t^{1}(V\times [0, T])$.

    Since $u_j^{[i]}$ are Lipschitz continuous with the Lipschitz constant bounded by $(2+deg(v_j))\pi$, then $u^{\ast}_j$ is also Lipschitz continuous after taking the limit. Let $\phi$ be a smooth test function with $\phi(0) = \phi(T) = 0$. Then by the dominated convergence theorem, 
    \begin{align*}
        \int_0^T u_j^{\ast} \phi' \mathrm{d}t = \lim _{i \to \infty}  \int_0^T u_j^{[i]} \phi' \mathrm{d}t = \lim _{i \to \infty}  \int_0^T \widetilde{K}_j(u^{[i]}) \phi \mathrm{d}t =  \int_0^T \widetilde{K}_j(u^{\ast}) \phi \mathrm{d}t.
    \end{align*} 
    The last equality follows from the continuity of the extended curvature $\widetilde{K}$. Thus, $\frac{\mathrm{d}}{\mathrm{d}t}u_j^{\ast} = -\widetilde{K}_j(u^{\ast})$ in the weak sense. By Lemma \ref{lem:lip}, we have $u_j^{\ast} \in C^1[0,T]$.
\end{proof}

\begin{remark}
    Let $u(t)$ be a solution of the extended flow (\ref{extended CYF}). By the variational principle (\ref{weight}), it is clear that $|\frac{\partial K_i}{\partial u_j}| = \infty$ when some triangle degenerates. Thus, $\widetilde{K}_i$ is generally not Lipschitz continuous, and the uniqueness of the solution to the extended flow remains unknown. 
\end{remark}

To prove the uniqueness of the flow, we first introduce a maximal principle of nonlinear heat equations in the discrete setting, which is first obtained in Ge-Hua-Zhou \cite{2504.05817}.
\begin{lemma}
    \label{lem:max principle}
    Let $G=(V,E)$ be an undirected infinite graph and let $\{\omega(t)\}_{t \geqslant 0}$
    be a one-parameter family of nonnegative weights on $E$ for $t \in [0,T]$
    such that
    \begin{equation}
        \sum_{j:j\thicksim i}\omega_{ij}(t) \leqslant C, \; \forall (j,t) \in V \times [0,T]. 
    \end{equation}
    Suppose $f:V \times [0, T] \to \mathbb{R}$ is a bounded solution to 
    \begin{equation}
        \frac{\mathrm{d}f(t)}{\mathrm{d}t} = \Delta_{\omega(t)}f(t)
    \end{equation}
    for $t \in [0, T]$ with $f(0) = 0$ on $V$, then $f(t) \equiv 0$ 
    for $t \in [0, T]$.
\end{lemma}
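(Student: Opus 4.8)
The plan is to reduce the statement to a scalar Grönwall inequality for the quantity $h(t) = \sup_{i\in V}|f_i(t)|$, which is finite by the boundedness hypothesis and measurable since it is a supremum of continuous functions, hence lower semicontinuous. The obstruction to the naive maximum-principle argument on an infinite graph is that the spatial supremum need not be attained at any vertex, so one cannot simply evaluate the equation at a maximizing vertex and read off the sign of $\Delta_\omega$. The uniform row-sum bound $\sum_{j\sim i}\omega_{ij}(t)\le C$ is precisely what lets us bypass this difficulty.

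First I would rewrite the equation in integrated (Duhamel) form. Setting $a_i(t) = \sum_{j\sim i}\omega_{ij}(t)$, the equation reads $\frac{\mathrm{d}}{\mathrm{d}t}f_i + a_i f_i = \sum_{j\sim i}\omega_{ij}f_j$. Multiplying by the integrating factor $e^{\int_0^t a_i}$ and using the initial condition $f_i(0)=0$ gives
\[
    f_i(t) = \int_0^t e^{-\int_s^t a_i(\tau)\,\mathrm{d}\tau}\sum_{j\sim i}\omega_{ij}(s)\,f_j(s)\,\mathrm{d}s.
\]
Since the weights are nonnegative, $a_i\ge 0$, so the exponential factor is at most $1$, and I would estimate
\[
    |f_i(t)| \le \int_0^t \sum_{j\sim i}\omega_{ij}(s)\,|f_j(s)|\,\mathrm{d}s \le \int_0^t a_i(s)\,h(s)\,\mathrm{d}s \le C\int_0^t h(s)\,\mathrm{d}s,
\]
where the final inequality uses the uniform bound on the row sums. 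Taking the supremum over $i\in V$ yields $h(t)\le C\int_0^t h(s)\,\mathrm{d}s$ for every $t\in[0,T]$. As $h$ is bounded and measurable, Grönwall's inequality forces $h\equiv 0$, i.e. $f(t)\equiv 0$ on $[0,T]$.

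The main obstacle is the careful justification of the passage from the pointwise bound to the inequality for $h$: one must verify that $h$ is integrable (which follows from $0\le h\le \sup_{V\times[0,T]}|f|<\infty$ together with lower semicontinuity) and that replacing $|f_j(s)|$ by $h(s)$ under the sum and then taking the supremum on the left are both legitimate, which here are mere inequalities and hence immediate. I would also confirm that the Duhamel representation is valid under the stated regularity: $f$ is $C^1$ in $t$ and the sum defining $\Delta_{\omega(t)}f_i$ is finite by local finiteness of the triangulation with the row-sum bound controlling it uniformly, so the integrating-factor manipulation is rigorous and the interchange of differentiation with the finite sum poses no issue.
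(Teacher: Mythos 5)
Your proof is correct, and it takes a genuinely different route from the paper's. The paper proves the lemma as a bona fide maximum principle: it reduces to showing that a bounded subsolution ($\frac{\mathrm{d}f}{\mathrm{d}t}\leqslant\Delta_{\omega(t)}f$ with $f(0)\leqslant 0$) stays nonpositive, and to force the supremum to be attained on the infinite graph it perturbs $f$ to $f_\delta=f-\delta d_0-(C+1)\delta t$, where $d_0$ is the combinatorial distance to a fixed vertex; boundedness of $f$ makes $f_\delta\to-\infty$ at spatial infinity, the row-sum bound gives $\Delta_{\omega(t)}d_0\leqslant C$, the first-derivative test at a maximum point rules out $t_{max}>0$, and letting $\delta\to 0$ concludes, with the two-sided statement following by applying this to $\pm f$. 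You instead observe that the hypothesis $\sum_{j\sim i}\omega_{ij}(t)\leqslant C$ makes $\Delta_{\omega(t)}$ a bounded operator on $\ell^\infty(V)$ and run the standard Banach-space ODE uniqueness argument: Duhamel plus Gr\"onwall for $h(t)=\sup_i|f_i(t)|$, with the measurability of $h$ correctly supplied by lower semicontinuity (and Gr\"onwall closes for merely bounded measurable $h$ via the iteration $h(t)\leqslant C^n\Vert h\Vert_\infty t^n/n!$). Your route is shorter, avoids the attainment issue and the auxiliary distance function entirely, and yields strictly more than uniqueness, namely the stability estimate $h(t)\leqslant e^{Ct}h(0)$ for arbitrary bounded initial data; it even adapts to subsolutions by replacing $|f|$ with $f^+$, since $\omega_{ij}\geqslant 0$ gives $\sum_{j\sim i}\omega_{ij}(s)f_j(s)\leqslant a_i(s)\sup_j f_j^+(s)$, so little is lost relative to the paper's one-sided formulation. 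Two minor points to tighten: your integrating factor tacitly requires $t\mapsto a_i(t)$ to be measurable, which the lemma does not literally assume (harmless in the paper's application, where the weights are continuous in $t$), and you can sidestep it by integrating the equation directly, $|f_i(t)|\leqslant\int_0^t\bigl(a_i(s)|f_i(s)|+\sum_{j\sim i}\omega_{ij}(s)|f_j(s)|\bigr)\,\mathrm{d}s\leqslant 2C\int_0^t h(s)\,\mathrm{d}s$, at the cost of the constant $2C$; and the appeal to ``local finiteness of the triangulation'' is unnecessary, as the row-sum bound together with boundedness of $f$ already makes all sums absolutely convergent for a general graph. What the paper's heavier perturbation technique buys in exchange is flexibility: it only needs a proper nonnegative function $\rho$ with $\Delta_{\omega(t)}\rho\leqslant C$, so it can survive in settings where $\Delta_\omega$ fails to be bounded on $\ell^\infty$, whereas under the stated uniform row-sum hypothesis the Gr\"onwall argument is the quicker and more quantitative route.
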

For the completeness of the proof, we prove the maximum principle as follows.
 \begin{proof}
     It is only necessary to prove that if $f:V \times [0, T] \to \mathbb{R}$ is a bounded solution to 
     \begin{equation*}
         \frac{\mathrm{d}f(t)}{\mathrm{d}t} \leqslant \Delta_{\omega(t)}f(t)
     \end{equation*}
     with $f(0) \leqslant 0$ on $V$, then $f(t) \leqslant 0$ 
     for $t \in [0, T]$. Fixing a vertex $v_0 \in V$, we denote by $d_0(i) = dist(v_0, v_i)$ the distance between $v_0$ and $v_i$. Then we have
     \begin{equation*}
         \Delta_{\omega(t)}d_0(i) \leqslant \sum_{j:j\thicksim i}\omega_{ij} \leqslant C
     \end{equation*}
     For $\delta > 0$, we set $f_{\delta} = f - \delta d_0 - (C+1)\delta t$, then
     \begin{align}
         \frac{\mathrm{d}f_{\delta}}{\mathrm{d}t} &= \Delta_{\omega(t)}f- (C+1)\delta \nonumber\\
         &= \Delta_{\omega(t)}f_{\delta} +(\Delta_{\omega(t)}d_0 - C - 1)\delta \nonumber\\
         &< \Delta_{\omega(t)}f_{\delta}.
         \label{3.16}
     \end{align}
     Since $f$ is bounded, then $f_{\delta}(v_i, t) \to -\infty$ for any $t \in [0,T]$ as $d_0(i) \to \infty$. Therefore, we assume that $f_{\delta}$ attains its maximum at $(v_{max}, t_{max})$. If $t_{max} = 0$, then $f_{\delta}(v,t) \leqslant f_{\delta}(v_{max}, 0) \leqslant f(v_{max}, 0) \leqslant 0$ for any $(v,t) \in V \times [0,T]$. If $t_{max} \in (0,T]$, then $\frac{df_{\delta}}{dt}(v_{max}, t_{max}) \geqslant 0$ and $\Delta_{\omega(t)}f_{\delta}(v_{max}, t_{max}) \leqslant 0$, which leads to a contradiction to (\ref{3.16}). Hence, we have $f_{\delta} \leqslant 0$ on $V \times [0,T]$. Since $\delta$ is arbitrary, we have $f \leq  0$ by letting $\delta \to 0$.

 \end{proof}

\begin{proof}[\textbf{Proof of Theorem \ref{thm:uniqueness}}]
    Let $u(t), \hat {u}(t)$ be two solutions of the flow (\ref{CYF}) with 
    $\mu(t), \hat {\mu}(t)$ defined in (\ref{weight}).
    By the variational principles and the Newton-Leibniz formula, we have
    \begin{align*}
        &\frac{\mathrm{d}(u(t)-\hat {u}(t))}{\mathrm{d}t} = - (K_{i}(u(t))-K_{i}(\hat{u}(t))) \nonumber\\
        &=-\sum_{j: j \sim i} \int_{0}^{1} \frac{\partial K_{i}}{\partial u_{j}}(s u(t)+(1-s) \hat{u}(t)) \mathrm{d} s \cdot\left[u_{j}(t)-\hat{u}_{j}(t)\right]- \nonumber\\
        &\; \; \; \; \int_{0}^{1} \frac{\partial K_{i}}{\partial u_{i}}(s u(t)+(1-s) \hat{u}(t)) \mathrm{d} s \cdot\left[u_{i}(t)-\hat{u}_{i}(t)\right] \nonumber\\
        &= \sum_{j: j \sim i} \int_{0}^{1} \mu_{ij}(su(t) + (1-s)\hat {u}(t))\mathrm{d}s \cdot\left[(u_{j}(t)-\hat{u}_{j}(t)) - (u_{i}(t)-\hat{u}_{i}(t))\right].
    \end{align*}
    We write the above equation as  
    \begin{equation*}
        \frac{\mathrm{d}(u(t)-\hat {u}(t))}{\mathrm{d}t} = \Delta_{\omega(t)}(u(t)-\hat {u}(t)),
    \end{equation*}
    where
    \begin{equation*}
        \omega_{ij}(t) = \int_{0}^{1} \mu_{ij}(su(t) + (1-s)\hat {u}(t)) \,\mathrm{d}s. 
    \end{equation*}
    It's straightforward to check that if $l(su(t) + (1-s)\hat {u}(t))$ is 
    uniformly Delaunay for $s \in [0,1]$, then $\omega_{ij}(t) > \epsilon$ 
    for some constant $\epsilon > 0$.

    Write $l_{ij} = l_{ij}(u),\; \hat{l}_{ij} =  l_{ij}(\hat{u})$ for simplicity. By the flow (\ref{CYF}) we have 
    \begin{equation}
        -(2+M)\pi T \leqslant u_i \leqslant (2+M)\pi T, \;  \forall i \in V.
        \label{3.13}
    \end{equation}
    Thus, 
    \begin{equation*}
        d_{ij}e^{-(2+M)\pi T} \leqslant l_{ij}, \hat{l}_{ij} \leqslant d_{ij}e^{(2+M)\pi T}, \;  \forall ij \in E.
    \end{equation*}
    For $s \in [0,1]$,
    \begin{align*}
        l_{ij}(su + (1-s) \hat{u}) &= e^{\frac{1}{2}(s(u_i + u_j) + (1-s)(\hat{u}_i + \hat{u}_j))}d_{ij}\nonumber\\
        &= (e^{\frac{1}{2}(u_i + u_j)}d_{ij})^s \cdot (e^{\frac{1}{2}(\hat{u}_i + \hat{u}_j)}d_{ij})^{1-s}\nonumber\\      
        &= l_{ij}^s \cdot \hat{l}_{ij}^{1-s}.               
    \end{align*}
    Thus, 
    \begin{equation*}
        d_{ij}e^{-(2+M)\pi T} \leqslant l_{ij}(su + (1-s) \hat{u}) \leqslant d_{ij}e^{(2+M)\pi T}, \;  \forall ij \in E.
    \end{equation*}
    Next, we claim that all inner angles $\theta^{jk}_i(su + (1-s) \hat{u})$
    lies in a compact subset of $(0, \pi)$. Fix $\Delta ijk \in F$ under 
    the metric $(su + (1-s) \hat{u}) \ast d$. Since $\theta^{jk}_i \leqslant \pi - \epsilon$,
    either $\theta^{ik}_j$ or $\theta^{ij}_k$ must be greater than $\frac{\epsilon}{2}$. Without loss of generality, assume that $\theta^{ik}_j>\frac{\epsilon}{2}$. Note the initial metric $d$ is uniformly nondegenerate, then by the sine law,
    \begin{equation*}
        \sin \theta^{jk}_i = \frac{l_{jk}}{l_{ik}}\sin \theta^{ik}_j \geqslant \frac{d_{jk}}{d_{ik}}e^{-(4+2M)T}\sin \frac{\epsilon}{2} \geqslant e^{-(4+2M)T}\sin{\epsilon}\sin \frac{\epsilon}{2}.
    \end{equation*}
    Thus 
    \begin{equation*}
        0 < C_1(\epsilon, M, T) \leqslant \theta^{jk}_i \leqslant \pi - \epsilon.
    \end{equation*}
    Then, by the boundedness of inner angles,
    \begin{align*}
        \mu_{ij}(su(t) + (1-s)\hat {u}(t)) \leqslant C_2(\epsilon, M, T),\\
        \omega_{ij}(t) \leqslant C_3(\epsilon, M, T).
    \end{align*}
    Since $T$ has bounded degree,
    \begin{equation*}
        \sum_{j:j\thicksim i}\omega_{ij}(t) \leqslant C(\epsilon, M, T).
    \end{equation*}
    Let $g(t) = u(t) - \hat{u}(t)$, we have
    \begin{equation*}
        \frac{\mathrm{d}g(t)}{\mathrm{d}t} = \Delta_{\omega(t)}g(t).
    \end{equation*}
    Moreover, $|g(t)| \leqslant (4+2M)\pi T$ by (\ref{3.13}). Then $u(t) \equiv \hat{u}(t)$
    follows from Lemma \ref{lem:max principle}.
\end{proof}

Next, we consider the infinite hexagonal triangulation of the plane $T_H = (V_H, E_H, F_H)$ equipped with a constant metric $d = d_{reg} \equiv 1$. Let $\phi: V_H \to \mathbb{R} $ be a conformal factor on $T_H$. We consider the combinatorial Yamabe flow \ref{CYF2} with initial metric $\phi \ast d$, which is a small perturbation of the regular metric. We apply a method first introduced in Ge-Hua-Zhou \cite{2504.05817} to reformulate (\ref{CYF2}) as a semilinear parabolic equation. Let $u:V_H \to \mathbb{R}$ be a conformal factor on $T_H$ and $l = u \ast d$ be conformal to the regular metric. Then for $\Delta ijk \in F_H$,
\begin{align*}
    \theta_i^{jk} &= \arccos \frac{l_{ij}^2 + l_{ik}^2 - l_{jk}^2}{2l_{ij}l_{ik}}\nonumber\\
    &= \arccos \frac{e^{u_i + u_j} + e^{u_i + u_k} - e^{u_j + u_k}}{2e^{u_i + \frac{1}{2}(u_j + u_k)}}\nonumber\\
    &= G(u_j - u_i, u_k - u_i),
\end{align*}
where 
\begin{equation*}
    G(x, y) = \arccos \frac{e^x + e^y -e^{x + y}}{2e^{\frac{1}{2}(x + y)}}.
\end{equation*}
Since $G(0,0) = \frac{\pi}{3}$, $G_x(0,0) = G_y(0,0) = \frac{\sqrt{3}}{6}$, we write the Taylor expansion of $G(x,y)$ as
\begin{equation*}
    G(x,y) = \frac{\pi}{3} + \frac{\sqrt{3}}{6}(x + y) + \widetilde{F}(x, y),
\end{equation*}
where $\widetilde{F}(x, y)$ satisfies $\widetilde{F}(z) = O(|z|^2)$.
Then for any vertex $i \in V_F$, we have
\begin{align}
    \frac{\mathrm{d}u_i}{\mathrm{d}t} &= -2\pi + \sum_{ijk \in F_H}G(u_j - u_i, u_k - u_i)\nonumber\\
    &= \sum_{ijk \in F_H} \frac{\sqrt{3}}{6}(u_j - u_i) + \frac{\sqrt{3}}{6}(u_k - u_i) + \widetilde{F}(u_j - u_i, u_k - u_i)\nonumber\\
    &= \Delta_c u_i + F(Du)(i),
    \label{semilinear eq}
\end{align}
where $\Delta_c$ is the discrete Laplacian operator with constant weight $c = \frac{\sqrt{3}}{3}$ and 
\begin{equation*}
    F(Du)(i) = \sum_{ijk \in F_H}\widetilde{F}(u_j - u_i, u_k - u_i).
\end{equation*}

Following an approach similar to Theorem 5.4 in Ge-Hua-Zhou\cite{2504.05817}, we have the following lemma.
\begin{lemma}
    [\cite{2504.05817}]
    \label{convergence}
    There exist constants $\epsilon_0 > 0$ and $T_0 = T_0(\epsilon_0) > 0$ such that for any initial value $\Vert \phi \Vert_{l^2} < \epsilon_0$, the semilinear parabolic equation
    \begin{align}
             \label{semilinear equation}
             \left\{ \begin{aligned}
             \frac{\mathrm{d}u_i}{\mathrm{d}t} -  \Delta_c u_i &= F(Du)(i)\\
              u(0) &= \phi
              \end{aligned} \right.
    \end{align}
    admits a solution $u(t)$ on $[0, T_0)$ such that 
    \begin{equation}
        \frac{\mathrm{d}}{\mathrm{d}t}\|u(t)\|_{l^2}^2+\frac{\sqrt{3}}{3}\mathcal{E}(u(t))\leq0,
        \label{energy}
    \end{equation}
    where the Dirichlet energy of a function $u \in \mathbb{R}^{V_H}$ is defined as
    \begin{equation*}
        \mathcal{E}(u) = \sum_{ij \in E_H} |u_i - u_j|^2.
    \end{equation*}

\end{lemma}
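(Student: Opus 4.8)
The plan is to treat \eqref{semilinear equation} as a semilinear evolution equation in the Hilbert space $\ell^2(V_H)$, solve it by a contraction mapping argument on the Duhamel (mild) formulation, and then extract the energy inequality \eqref{energy} by pairing the equation with $u$ itself. First I would record the linear structure: because $T_H$ has constant vertex degree $6$, the operator $\Delta_c$ is a bounded, self-adjoint, nonpositive operator on $\ell^2(V_H)$ with $\langle u,\Delta_c u\rangle=-\tfrac{\sqrt3}{3}\mathcal E(u)$. Hence the heat semigroup $e^{t\Delta_c}=\sum_{n\ge0}\tfrac{t^n}{n!}\Delta_c^n$ converges in operator norm and is a contraction, $\|e^{t\Delta_c}\|_{\ell^2\to\ell^2}\le1$. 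A mild solution is then a fixed point of
\[
\Phi(u)(t)=e^{t\Delta_c}\phi+\int_0^t e^{(t-s)\Delta_c}F(Du(s))\,\mathrm{d}s
\]
in a small ball of the space $X=C([0,T_0];\ell^2(V_H))$.

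The key analytic input is that the nonlinearity behaves quadratically on $\ell^2$. Since $\widetilde F$ is smooth near the origin with $\widetilde F(z)=O(|z|^2)$, Taylor's theorem gives a uniform pointwise bound $|\widetilde F(z)|\le C|z|^2$ for small $|z|$, so that $|F(Du)(i)|\le C\sum_{j\sim i}|u_j-u_i|^2$ once $\|u\|_{\ell^2}$ is small; here I use $\ell^2\hookrightarrow\ell^\infty$ to keep the arguments $u_j-u_i$ inside the region where the quadratic estimate is valid. Summing over vertices and invoking the bounded degree together with the discrete embedding $\|u\|_{\ell^4}\le\|u\|_{\ell^2}$, I would deduce
\[
\|F(Du)\|_{\ell^2}\le C_1\|u\|_{\ell^2}^2,
\qquad
\|F(Du)-F(Dv)\|_{\ell^2}\le C_2\big(\|u\|_{\ell^2}+\|v\|_{\ell^2}\big)\|u-v\|_{\ell^2}.
\]
With these estimates and $\|e^{t\Delta_c}\|\le1$, choosing $R\sim\epsilon_0$ and $T_0$ small makes $\Phi$ map the ball $B_R\subset X$ into itself and act as a contraction there, so the Banach fixed point theorem produces a solution on $[0,T_0)$. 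Because $\Delta_c$ is a bounded operator, this mild solution is automatically $C^1$ in $t$ and solves \eqref{semilinear equation} in the classical sense.

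Finally I would obtain \eqref{energy} by differentiating the squared norm: pairing the equation with $u$ yields $\tfrac{d}{dt}\|u\|_{\ell^2}^2=-\tfrac{2\sqrt3}{3}\mathcal E(u)+2\langle u,F(Du)\rangle$. The cross term is controlled by the same quadratic bound on $F$ and the identity $\sum_i\sum_{j\sim i}|u_j-u_i|^2=2\mathcal E(u)$, giving $2|\langle u,F(Du)\rangle|\le C_3\|u\|_{\ell^\infty}\mathcal E(u)\le C_3\|u\|_{\ell^2}\mathcal E(u)$. Shrinking $\epsilon_0$ so that $C_3\|u(t)\|_{\ell^2}\le\tfrac{\sqrt3}{3}$ throughout the interval absorbs the cross term into the dissipation $-\tfrac{2\sqrt3}{3}\mathcal E(u)$ and leaves $\tfrac{d}{dt}\|u\|_{\ell^2}^2+\tfrac{\sqrt3}{3}\mathcal E(u)\le0$.

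I expect the \emph{main obstacle} to be twofold and intertwined. The first part is establishing that $F$ is a genuinely quadratic, locally Lipschitz map $\ell^2\to\ell^2$; this is where one must combine the purely local Taylor estimate for $\widetilde F$ with discrete Sobolev-type embeddings and the bounded geometry of $T_H$, rather than any pointwise manipulation. The second part is guaranteeing that $\|u(t)\|_{\ell^2}$ remains below the smallness threshold on all of $[0,T_0)$, so that the energy estimate actually closes; this is handled by a continuity/bootstrap argument, noting that the energy inequality itself forces $\|u(t)\|_{\ell^2}$ to be nonincreasing, which is consistent with and reinforces the smallness hypothesis used to derive it.
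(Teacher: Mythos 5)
Your proposal is correct, and it is essentially the argument the paper relies on: the paper itself gives no proof of this lemma, deferring to Theorem 5.4 of Ge--Hua--Zhou \cite{2504.05817}, whose route is exactly your Duhamel/contraction-mapping scheme in $\ell^2(V_H)$ (using boundedness of $\Delta_c$ on the degree-$6$ graph, the quadratic estimate $\|F(Du)\|_{\ell^2}\leqslant C\|u\|_{\ell^2}^2$ via $\ell^2\hookrightarrow\ell^\infty$ and $\|u\|_{\ell^4}\leqslant\|u\|_{\ell^2}$), followed by pairing with $u$ and absorbing the cross term $2\langle u,F(Du)\rangle$ into the dissipation $-\tfrac{2\sqrt3}{3}\mathcal E(u)$ under the smallness hypothesis. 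Your bootstrap observation that the energy inequality makes $\|u(t)\|_{\ell^2}$ nonincreasing, thereby preserving the smallness threshold, is likewise the mechanism the paper invokes when iterating the lemma on $[T_0,2T_0]$, $[2T_0,3T_0]$, and so on in the proof of Theorem \ref{Convergence}.
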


\begin{proof}[\textbf{Proof of Theorem \ref{Convergence}}]

    By Lemma \ref{lem:triangle}, we can choose sufficiently small $\epsilon_1$ such that for any $u:V_H \to \mathbb{R}$ with $\Vert u - \phi \Vert_{l^\infty} \leqslant 2\epsilon_1$, $u \ast d$ satisfies triangle inequalities and for any $\Delta ijk \in  F_H$,
    \begin{equation}
        \theta^{jk}_i(u \ast d)\geqslant \frac{\pi}{6},
        \label{nondegenerate}
    \end{equation}
    and for any adjacent triangles $\Delta ijk_1, \Delta 
    ijk_2 \in  F_H$,
    \begin{equation}
        \theta^{jk_1}_i(u \ast d) + \theta^{jk_2}_i(u \ast d) \leqslant \frac{5\pi}{6}.
        \label{Delaunay}
    \end{equation}

    By Lemma \ref{convergence}, there exist constants $\epsilon_2 > 0$ and $T_0 = T_0(\epsilon_2) > 0$ and we obtain the solution of (\ref{semilinear equation}) on $[0,T_0)$ provided that $\Vert \phi \Vert_{l^2} < \epsilon_2$. Set $\epsilon_0 = \min \{\epsilon_1, \epsilon_2\}$ and assume that $\Vert \phi \Vert_{l^2} < \epsilon_0$. Thus, the solution satisfies $\sup_{t \in [0, \delta)} \Vert u(t) \Vert_{l^2} \leqslant \epsilon_0$ and the metric $u(t) \ast d$ satisfies (\ref{nondegenerate}),(\ref{Delaunay}) in $[0, T_0)$. It is clear that if $u$ and $\hat{u}$ are two solutions in $[0, T_0)$ then $(su + (1-s)\hat {u}) \ast d$ are uniformly Delaunay for $t \in [0, T_0)$ and $s \in [0,1]$. Then the uniqueness follows from Theorem \ref{thm:uniqueness}.

    Noticing that the $l^2$ norm of $u(t)$ is non-increasing by Lemma \ref{convergence}, we can extend the solution to $[T_0, 2T_0]$, $[2T_0, 3T_0]$ and to $[0, \infty)$ finally such that the metric $u(t) \ast d$ satisfies (\ref{nondegenerate}),(\ref{Delaunay}) in $[0, \infty)$. Therefore, the flow (\ref{CYF2}) admits a unique global solution $u(t)$, with which (\ref{energy}) holds in $[0, \infty)$. Therefore, by (\ref{energy}) we have 
    \begin{equation*}
        \int_{0}^{\infty}\mathcal{E}(u(t))\mathrm{d}t < \infty.
    \end{equation*}
    Then $\mathcal{E}(u(t)) \to 0$ as $t \to \infty$ and $u(t) \ast d$ converges to the regular metric on $T_H$.

\end{proof}

\section{Appendix}\label{appendix}
We will prove Lemma \ref{lem:triangle} in this appendix.

\begin{proof}[\textbf{Proof of Lemma \ref{lem:triangle}}]
    Since the triangle $\Delta ijk$ is uniformly nondegenerate, by the sine law, 
    \begin{equation*}
        \sin \epsilon \leqslant \frac{d_{ij}}{d_{ik}}, \frac{d_{ij}}{d_{jk}}, \frac{d_{jk}}{d_{ik}} \leqslant \frac{1}{\sin \epsilon} 
    \end{equation*}
    Let $\delta = \delta(\epsilon)$ be a constant to be determined, then
    \begin{equation*}
        e^{-\delta}d_{ij} \leqslant \widetilde{d}_{ij} \leqslant e^{\delta}d_{ij}, \;  \forall ij \in E 
    \end{equation*}
    Thus,
    \begin{align*}
        |\cos \theta_i - \cos \widetilde{\theta}_i| &= \left | \frac{d_{ij}^2 + d_{ik}^2 - d_{jk}^2}{2d_{ij}d_{ik}}-\frac{\widetilde{d}_{ij}^2 + \widetilde{d}_{ik}^2 - \widetilde{d}_{jk}^2}{2\widetilde{d}_{ij}\widetilde{d}_{ik}}\right | \nonumber\\
        &\leqslant \left |\frac{d_{ij}}{2d_{ik}}-\frac{\widetilde{d}_{ij}}{2\widetilde{d}_{ik}} \right | + \left |\frac{d_{ik}}{2d_{ij}}-\frac{\widetilde{d}_{ik}}{2\widetilde{d}_{ij}}\right | + \left |\frac{d_{jk}^2}{2d_{ij}d_{ik}} - \frac{\widetilde{d}_{jk}^2}{2\widetilde{d}_{ij}\widetilde{d}_{ik}}\right |\nonumber\\
        &\leqslant \frac{e^{2\delta}-1}{\sin \epsilon} + \frac{e^{4\delta}-1}{2(\sin \epsilon)^2}\nonumber\\
        &\leqslant\frac{3(e^{4\delta}-1)}{2(\sin \epsilon)^2}.
    \end{align*}
    Let
    \begin{equation*}
        \delta = \frac{1}{4}\log (1 + \frac{2(\sin \epsilon)^2}{3}(1-\cos \frac{\epsilon}{4})),
    \end{equation*}
    we obtain $|\theta_i - \widetilde{\theta}_i| \leqslant \frac{\epsilon}{2}$ immediately. Moreover, it is clear to verify that $\delta \sim C\epsilon^4 $ when $\epsilon$ is sufficiently close to 0, where $C$ is a universal constant.

\end{proof}

\textbf{Acknowledgements.}
The author expresses sincere gratitude to Bobo Hua for his unwavering support and for proposing the research topics. The author also wishes to thank Puchun Zhou for his valuable suggestions, which significantly enhanced the clarity, coherence, and readability of the manuscript. In addition, the author is grateful to Xinrong Zhao and Wenhuang Chen for their insightful discussions throughout the course of this research.

\nocite{*}
\bibliographystyle{unsrt}
\bibliography{reference}

\noindent Bohao Ji, bhji24@m.fudan.edu.cn\\
\emph{School of Mathematical Sciences, Fudan University, Shanghai, 200433, P.R. China}\\
    
\end{document}